\newcommand{\R}{\mathbb R}
\newcommand{\N}{\mathbb N}
\newcommand{\Z}{\mathbb Z}
\newcommand{\eps}{\varepsilon}
\newcommand{\loc}{\mathrm{loc}}
\renewcommand{\vec}[1]{\mathbf{#1}}
\newcommand{\enclose}[1]{\left(#1\right)}
\newcommand{\ENCLOSE}[1]{\left\{#1\right\}}
\newtheorem{theorem}{Theorem}[section]
\newtheorem{proposition}[theorem]{Proposition}
\newtheorem{lemma}[theorem]{Lemma}
\newtheorem{corollary}[theorem]{Corollary}
\theoremstyle{definition}
\newtheorem{definition}[theorem]{Definition}
\theoremstyle{remark}
\newtheorem{remark}[theorem]{Remark}
\author[Novaga]{Matteo Novaga}
\address[Matteo Novaga, Emanuele Paolini, Vincenzo Tortorelli]{Dipartimento di Matematica, Universit\`a di Pisa \\
	Largo Bruno Pontecorvo 5 \\ I-56127, Pisa}
\email[Matteo Novaga]{matteo.novaga@unipi.it}
\author[Paolini]{Emanuele Paolini} 
\email[Emanuele Paolini]{emanuele.paolini@unipi.it}
\author[Stepanov]{Eugene Stepanov}
\address[Eugene Stepanov]{St.Petersburg Branch of the Steklov Mathematical Institute of the Russian Academy of Sciences,
	Fontanka 27,
	191023 St.Petersburg, Russia
	%\and
	%Department of Mathematical Physics, Faculty of Mathematics and Mechanics,
	%St. Petersburg State University, Universitetskij pr.~28, Old Peterhof,
	%198504 St.Petersburg, Russia%, email: stepanov.eugene@gmail.com
	%\and ITMO University
	\and
	Faculty of Mathematics, Higher School of Economics, Moscow
}
\email[Eugene Stepanov]{stepanov.eugene@gmail.com}
\author[Tortorelli]{Vincenzo Maria Tortorelli}
\email[Vincenzo Maria Tortorelli]{tortorelli@dm.unipi.it}
\thanks{The first author is a member of the INDAM/GNAMPA and was supported by the PRIN Project 2019/24.
	The work of the third author has been partially financed by
	the RFBR grant \#20-01-00630 A.
}
\subjclass[2010]{Primary 53C65. Secondary 49Q15, 60H05.}
\keywords{Isoperimetric clusters, isoperimetric sets, homogeneous space}
\date{\today}
\title[Isoperimetric clusters in homogeneous spaces]{Isoperimetric clusters in homogeneous spaces via concentration compactness}
\begin{document}
	\begin{abstract}
		We show the existence of generalized clusters of a finite or even infinite number of sets, with minimal total perimeter and given total masses, in metric measure spaces homogeneous with respect to a group acting by measure preserving homeomorphisms, %isometries, 
		for a quite wide range of perimeter functionals.
		Such generalized clusters are a natural ``relaxed'' version of a cluster and can be thought of as ``albums'' with possibly infinite pages, having a minimal cluster drawn on each page, the total perimeter and the vector of masses being calculated by summation over all pages, the total perimeter being minimal among all generalized clusters with the same masses. The examples include any anisotropic perimeter in a Euclidean space, as well as a hyperbolic plane with the Riemannian perimeter and Heisenberg groups with a canonical left invariant perimeter or its equivalent versions.
	\end{abstract}
	
	\maketitle
	
	\section{Introduction}
	
	%Da citare: \cite{ChambMorPons15-curvflow} %%% Non ho capito se ha qc a che vedere con questo lavoro...
	%Da citare? \cite{AntFogPoz21}, \cite{AntPasPoz21}, \cite{AnBrFoPo21}
	
	In a finite-dimensional Euclidean space $\R^n$ we call 
	$\vec E = (E_1,\dots, E_N)$ with $N\in \N\cup\{\infty\}$ an \textit{$N$-cluster} of sets 
	if each $E_j\subset \R^n$ is Borel (possibly empty), and
	$\mathcal{L}^n(E_i\cap E_j)=0$ for all $i\neq j$,  $\mathcal{L}^n$ standing for the Lebesgue measure.
	If $P(M)$ stands for the classical (Euclidean) perimeter of the set $M\subset \R^n$, we set
	\begin{align*}
	%E_0 &:= X \setminus \bigcup E_j\\
	\vec m(\vec E) &:= \enclose{\mathcal{L}^n(E_1),\dots,\mathcal{L}^n(E_N)} \\
	P(\vec E) &:= %\frac 1 2 \sum_{j\geq 0} P(E_j)= 
	\frac 1 2 \sum_{j\geq 1} P(E_j)+
	\frac 1 2 P\left(\bigcup_{j\geq 1} E_j\right) 
	\end{align*}
	An $N$-cluster $\vec E$ is usually called
	% 
	% 	\begin{itemize}
	% 		\item[(i)] 
	\emph{minimal},  or \textit{isoperimetric}, if 
	\[
	P(\vec E) = \inf\ENCLOSE{P(\vec F)\colon \vec m(\vec F)=\vec m(\vec E)}.  
	\]
	%	\item[(ii)]  \emph{weakly minimal}, if 
	%	\[
	%	P(\vec E) = \inf\ENCLOSE{P(\vec F)\colon \vec m(\vec F)\ge\vec m(\vec E)}.  
	%	\]
	%	\end{itemize}
	%	We say that an $N$-cluster $\vec E$ is \emph{decomposable}
	%	if $\vec E = \vec F \cup \vec G$ (i.e.\ $E_k = F_k \cup G_k$) 
	%	with 
	%	\begin{align*}
	%	\vec m(\vec E) &= \vec m(\vec F) + \vec m(\vec G)\\
	%	P(\vec E) &= P(\vec F) + P(\vec G)\\
	%	\vec m(\vec F) &\neq \vec 0,\qquad
	%	\vec m(\vec G) \neq \vec 0
	%	\end{align*}
	
	Existence of \textit{finite} (i.e. with $N <+\infty$) isoperimetric clusters for the classical (Euclidean) perimeter in a Euclidean space 
	has been proven in~\cite{Maggi12-GMTbook} for each given vector of masses $\vec m$. The technique of the proof has been 
	further extended to several other situations, for various different notions of perimeter. More has been done for the problem of existence of \textit{isoperimetric sets}, i.e.\ in our terminology, minimal $1$-clusters. It has been accomplished
	for different versions of perimeter in finite-dimensional spaces such as Riemannian manifolds 
	(see the list of relevant results in the Introduction to~\cite{MondinoNard16-isoperim} as well as the recent papers~\cite{CeNo18,FloreNard20-isoperimRiemann,AntFogPoz21}) and even in more general spaces with Ricci curvature bounds~\cite{AnBrFoPo21,AntPasPoz21}. However these techniques  are based on regularity of isoperimetric
	sets, and are quite difficult even in the Euclidean case. Moreover, such results are only valid for \textit{finite} clusters. 
	
	The difficulty to prove the existence by purely variational techniques, even
	in the Euclidean space and just for isoperimetric sets (i.e. just $1$-clusters) is only due to noncompactness of the ambient space
	(in compact spaces it is trivial), and comes from the fact
	that minimizing sequences of sets may ``escape to infinity'' and lose part of their volumes. It is however well-known that
	the limits of the pieces going to infinity are still isoperimetric sets for their own volumes (which may be lower than the original requirement), see e.g.~\cite{DePhilFranzPrat17,PratSar18-isoperim} where the problem of existence of isoperimetric sets has been studied for
	quite a general family of weighted perimeters and volume measures. The same is known to happen also with \textit{finite}  isoperimetric clusters, see~\cite{Scattaglia20-clust} where a formula for the  perimeter of a minimal finite cluster is proven (again, for possibly weighted perimeters and volumes) suggesting that a minimal cluster is given by the union of the limit cluster (with possibly lower volumes) for a minimizing sequence
	and a ``cluster at infinity'' which has precisely the missing volumes. 
	Therefore to prove the existence of an isoperimetric set or cluster one needs to appropriately adjust the sets by adding necessary volume which requires a great deal of heavy regularity techniques. However it is important to emphasize that the problem is not merely technical. In fact, even for $N=1$ (isoperimetric sets) it might happen that in some Riemannian manifolds there are no  isoperimetric sets of some or even every volume, see~\cite{Ritore01-isoperim-symm} as well as~\cite{Nardulli14_isoperimgener,MondinoNard16-isoperim}.
	
	Here we consider a more general situation of a metric measure space $(X,d,\mu)$  (with distance $d$ and measure $\mu$) instead of an Euclidean space with Lebesgue measure, which is \textit{homogeneous} with respect to some group acting by measure preserving homeomorphisms. %isometries.
	We show that in such a situation one can easily prove by means of concentration compactness-like technique (i.e. similar to~\cite{Lions84-conccomp}), the existence of ``generalized'' minimal clusters (finite or possibly infinite) for each given vector of masses. Such generalized clusters are in a sense a natural ``relaxed'' version of a cluster and can be thought of as ``albums'' with possibly infinite pages, having a minimal $N$-cluster drawn on each page, the total perimeter and the vector of masses being calculated by summation over all pages, the total perimeter being minimal among all generalized clusters with the same vector of masses. Such a generalized cluster in fact keeps track of all the parts of minimal sequences ``escaping at infinity'' by placing them on possibly different pages.
	
	The toy examples we provide include finite or infinite minimal clusters for any anisotropic perimeter in $\R^n$, as well as in a hyperbolic plane with the Riemannian perimeter and in Heisenberg groups with a canonical left invariant perimeter or its equivalent versions.  In all these examples there is some natural discrete group acting on the respective space properly discontinuously and cocompactly by measure preserving isometries (of course, the existence of isoperimetric sets is also known in such situations).
	Note that the technique developed here is purely variational and does not involve any regularity-type arguments, thus allowing to obtain existence of generalzied minimal clusters for the whole range of perimeter functionals. The cost paid for such a simplicity is of course that one can say nothing a priori about the regularity of generalized clusters; and it has to be noted that even very weak regularity properties of a minimal generalized cluster would allow to conclude 
	the existence of a solution to the original problem (i.e.\ the existence of minimal clusters in the original space).
	
	The paper is structured as follows. After introducing the general axiomatic notion of a perimieter functional by a set of quite weak requirements
	in Section~\ref{sec:notation}, we provide in Section~\ref{sec:compsets} the general semicontinuity and compactness result for this functional
	for sequences of sets and in Section~\ref{sec:compclusters} for sequences of clusters. This allows to prove the existence of generalized
	optimal clusters in Section~\ref{sec:existgencluster}. Finally, in Section~\ref{sec_ClusterExamples} we give some basic applications of such existence results for various types of perimeter functional in different environments, in partcicular, in a finite-dimensional normed space (i.e.\ with either classical or anisotropic perimeter), in a hyperbolic space and in te Heisenberg group.
	
	\section{Notation and prelimiaries}
	\label{sec:notation}
	
	In the sequel we suppose $(X,d,\mu)$ to be a  metric measure space with distance $d$ and nonnegative $\sigma$-finite Radon measure
	$\mu$ with $\mu(X)\neq 0$, and $G$ be a discrete, countable, topological group acting properly discontinuously on $X$ 
	(i.e.\ $\{g\in G\colon gK\cap K \neq \emptyset\}$ is finite for all compact $K\subset X$) by homeomorphisms preserving the measure $\mu$. 
	We denote by $\mathcal{B}(X)$ the Borel $\sigma$-algebra of $X$, and by $\mathcal{A}(X)$ the class of open subsets of $X$.
	The metric space $(X,d)$ is said to have Heine-Borel property, if every closed ball is compact.
	
	By $L^1(\mu)$ (resp.\ $L^1_\loc(\mu)$) we denote the usual Lebesgue space of $\mu$-integrable (resp.\ $\mu$-integrable over compact sets) functions over $X$, and if $X=\R^n$, we denote by $C_0^\infty(\R^n)$ the set of infinitely differentiable functions with compact support. As usual, for an $E\subset X$ we let $\bar E$ stand for the closure of $E$ and $\mathbf{1}_E$ for the characteristic function of $E$, while $B_r(x)$ denotes the open ball of radius $r>0$ centered at $x\in X$.
	
	We write $g_k\to\infty$ for a sequence of $g_k\in G$, if $\lim_k g_k=\infty$ in the one-point compactifictaion
	$G\cup\{\infty\}$ of $G$, that is, for every finite $F\subset G$ the set of $\{k\in \N\colon g_k\in F\}$ is finite. 
	
	In what follows we let $P\colon \mathcal{B}(X)\times \mathcal{A}(X)\to \R^+\cup\{\infty\}$ be a ``relative perimeter'' functional 
	for which we customarily abbreviate $P(E):=P(E,X)$,
	satisfying
	\begin{itemize}
		\item[(semicontinuity)] $P(D, U) \le \liminf_k P(D_k,U)$ whenever $D_k \stackrel{L^1_\loc(\mu)}\longrightarrow D$,
		%	\item[(monotonicity)] If $U_k \nearrow U$, then $F(D, U_k)\nearrow F(D, U)$ as $k\to\infty$,
		\item[(Beppo Levi)] If $U_k \nearrow X$, then $P(D, U_k)\nearrow P(D)$ as $k\to\infty$,
		\item[(monotonicity)] $P(B, U)\leq  P(B, V)$ as $U\subset V$,
		\item[(superadditivity)] $P(B,U)\geq \sum_{k=1}^M P(B, U_k)$ whenever $U=\bigsqcup_{i=1}^M  U_k$,
		\item[($G$-invariance)] $P(gB,gU)= P(B,U)$ for every $g\in G$,
		\item[(compactness)] 
		if $E_k\subset X$ satisfy $\displaystyle\sup_k P(E_k,U)<+\infty$ 
		for some precompact set $U$, then
		there exists an $E\subset X$ such that, up to a subsequence, 
		$E_k \cap U\stackrel{L^1(\mu)}\longrightarrow E$.
		%	if $\displaystyle\sup_k P(E_k)<+\infty$ with $E_k\subset X$
		%	then, given any precompact set $U$
		%	there exists $E\subset U$ such that, up to a subsequence, 
		%	$E_k \cap U\stackrel{L^1(\mu)}\longrightarrow E$
	\end{itemize}

	\section{Compactness and semicontinuity for sequences of sets}
	\label{sec:compsets}
	
	\subsection{Semicontinuity}
	
	We start with the following semicontinuity statement which will be further applied both to perimeters and measures.
	
	\begin{theorem}[semicontinuity]
		\label{th:comp_semicont}
		Assume that $(X,d)$ has Heine-Borel property.
		Let $E_k\subset X$ be a sequence of Borel sets, and 
		$E^i\subset X$, $i\in I$, $I$ at most countable, $g_k^i\in G$ such that:
		\begin{gather}
		\label{eq_diffg1}
		\lim_k (g_k^{i'})^{-1}g_k^i = +\infty \qquad \mbox{for all $i\neq i'$}\\
		\label{eq_convE1}	
		(g_k^{i})^{-1} E_k \stackrel{L^1_\loc(\mu)}\longrightarrow E^i \qquad \text{as $k\to+\infty$}.
		\end{gather}
		Let also $F\colon \mathcal{B}(X)\times \mathcal{A}(X)\to \R$ be a functional satisfying
		\begin{itemize}
			\item[(semicontinuity)] $F(B, U) \le \liminf_k F(B_k,U)$ whenever $B_k \stackrel{L^1_\loc(\mu)}\longrightarrow B$,
			%	\item[(monotonicity)] If $U_k \nearrow U$, then $F(D, U_k)\nearrow F(D, U)$ as $k\to\infty$,
			\item[(Beppo Levi)] If $U_k \nearrow X$, then $F(D, U_k)\nearrow F(D, X)$ as $k\to\infty$,
			\item[(monotonicity)] $F(D, U)\leq  F(D, V)$ as $U\subset V$,
			\item[(superadditivity)] $F(B,U)\geq \sum_{k=1}^M F(B, U_k)$ whenever $U=\bigsqcup_{i=1}^M  U_k$,
			\item[($G$-invariance)] $F(gB,gU)= F(B,U)$ for every $g\in G$.
		\end{itemize}
		If $G$ acts on $X$ properly discontinuously, then
		\begin{gather*}
		\sum_i F(E^i,X) \le \liminf_k F(E_k,X).
		\end{gather*}
	\end{theorem}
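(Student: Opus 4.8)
The plan is to reduce the claim to a finite sum and then exploit the superadditivity of $F$ together with the "separation at infinity" hypothesis~\eqref{eq_diffg1}. First I would fix a finite subset $I_0 \subset I$ and show that $\sum_{i \in I_0} F(E^i, X) \le \liminf_k F(E_k, X)$; letting $I_0 \nearrow I$ then gives the result. By the Beppo Levi property applied to each of the finitely many $E^i$, $i \in I_0$, it suffices to fix a large precompact open set $U$ (say a ball $B_R(x_0)$, which is precompact by the Heine–Borel property) and prove $\sum_{i \in I_0} F(E^i, U) \le \liminf_k F(E_k, X)$, and then send $R \to \infty$.

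The heart of the argument is the following: by $G$-invariance, $F(E^i, U) = F((g_k^i)^{-1} E_k,\, U)$ is \emph{not} quite what we want, so instead I would write, again using $G$-invariance, $F(E^i, U) \le \liminf_k F((g_k^i)^{-1} E_k,\, U)$ via semicontinuity and~\eqref{eq_convE1}, and then $F((g_k^i)^{-1} E_k,\, U) = F(E_k,\, g_k^i U)$. So for each $i \in I_0$ we obtain $F(E^i, U) \le \liminf_k F(E_k, g_k^i U)$. Now the key geometric point: because $U$ is precompact and the action is properly discontinuous, and because~\eqref{eq_diffg1} forces $(g_k^{i'})^{-1} g_k^i \to \infty$ for $i \ne i'$, for all sufficiently large $k$ the translates $\{g_k^i U\}_{i \in I_0}$ are pairwise disjoint. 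Indeed $g_k^i U \cap g_k^{i'} U \ne \emptyset$ is equivalent to $(g_k^{i'})^{-1} g_k^i \in \{g \in G : gU \cap U \ne \emptyset\}$, which (after passing to a slightly larger precompact set containing $\bar U$) is a finite set by proper discontinuity, so it can hold for only finitely many $k$. Since $I_0$ is finite, there is a single $k_0$ beyond which \emph{all} the pairs are disjoint.

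Once disjointness is secured, superadditivity gives, for $k \ge k_0$,
\[
\sum_{i \in I_0} F\bigl(E_k,\, g_k^i U\bigr) \le F\Bigl(E_k,\, \bigsqcup_{i \in I_0} g_k^i U\Bigr) \le F(E_k, X),
\]
the last step by monotonicity. Taking $\liminf_k$ and using superadditivity of $\liminf$ over the finite index set $I_0$,
\[
\sum_{i \in I_0} F(E^i, U) \le \sum_{i \in I_0} \liminf_k F(E_k, g_k^i U) \le \liminf_k \sum_{i \in I_0} F(E_k, g_k^i U) \le \liminf_k F(E_k, X).
\]
Letting $U = B_R(x_0) \nearrow X$ and invoking Beppo Levi on each $E^i$ yields $\sum_{i \in I_0} F(E^i, X) \le \liminf_k F(E_k, X)$, and finally $I_0 \nearrow I$ (monotone convergence for the sum of nonnegative-... — here one should note $F$ need not be nonnegative, but the partial sums $\sum_{i \in I_0} F(E^i,X)$ are bounded above by the fixed finite number $\liminf_k F(E_k,X)$, so the supremum over finite $I_0$ is well-defined and equals $\sum_{i \in I} F(E^i, X)$) completes the proof.

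**Main obstacle.** I expect the only genuinely delicate point to be the uniform disjointness of the translates: one must check carefully that proper discontinuity applied to the compact set $\bar U$ (or $\bar U \cup \bar V$ for pairs) gives finiteness of $\{g : gU \cap U \ne \emptyset\}$, and then combine this with~\eqref{eq_diffg1} to get a \emph{single} threshold $k_0$ working simultaneously for all $\binom{|I_0|}{2}$ pairs — this is where finiteness of $I_0$ is essential, and is the reason the theorem is first proved for finite $I_0$ and only then extended.
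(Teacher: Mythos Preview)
Your proposal is correct and follows essentially the same route as the paper: fix a finite index set and a precompact ball $U=B_R(x_0)$, use proper discontinuity together with~\eqref{eq_diffg1} to make the translates $g_k^i U$ pairwise disjoint for large $k$, then combine $G$-invariance, semicontinuity, superadditivity and monotonicity, and finally let $R\to\infty$ (Beppo Levi) and the finite index set exhaust $I$. The only cosmetic difference is that the paper first passes to a subsequence so that $\liminf_k F(E_k,X)=\lim_k F(E_k,X)$, which slightly streamlines the chain of inequalities but is logically equivalent to your direct handling of the $\liminf$.
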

	
	\begin{proof}
		Up to a subsequence (not relabeled) we may
		suppose that \[\liminf_k F(E_k,X)=\lim_k F(E_k,X).\]
		Fix arbitrary $M\in \N$, $x_0\in X$ and  $R>0$ and denote for brevity $U:=B_R(x_0)$.  
		%	Consider and open precompact
		%	$U\subset X$ containing $\bar B_R(x_0)$. 
		Since $U$ is precompact by Heine-Borel property, and the action of $G$ on $X$ is properly discontinuous, then  
		$gU\cap U\neq \emptyset$ only for a finite number of $g\in G$. Thus
		from~\eqref{eq_diffg1} we get  that the sets 
		$g_k^1 U, \dots, g_k^M U$ are pairwise disjoint for all sufficiently large $k$.
		Hence we get
		\begin{equation}\label{eq_semicontF1}
		\begin{aligned}
		\lim_k F(E_k,X)  & \geq \limsup_k F\left( E_k,\bigsqcup_{i=1}^M g_k^i U \right) \quad \mbox{by monotonicity of $F$}\\
		& = \limsup_k \sum_{i=1}^M F(E_k, g_k^i U) \quad \mbox{by superadditivity of $F$}\\
		& \geq \sum_{i=1}^M \liminf_k F(E_k, g_k^i U).
		\end{aligned}
		\end{equation}	But $F(E_k,g_k^i U)) = F((g_k^i)^{-1} E_k,U)$ and from~\eqref{eq_convE1}  
		using semicontinuity of $F$ we obtain
		\begin{equation}\label{eq_semicontF2}
		\liminf_k F(E_k,g_k^i U) = \liminf_k F((g_k^i)^{-1} E_k,U)\ge F(E^i,U).
		\end{equation}	
		The inequalities~\eqref{eq_semicontF1} and\eqref{eq_semicontF2} together give
		\[
		\lim_k F(E_k, X)\ge \sum_{i=1}^M F(E^i,U) %\ge 
		=\sum_{i=1}^M F(E^i,B_R(x_0)).
		\]
		%	where in the last inequality again the montonicity of $F$ has been used.
		Letting now $R\to+\infty$, we obtain 
		\[
		\lim_k F(E_k,X)\ge \sum_{i=1}^M F(E^i,X)  
		\]
		and finally letting $M\to +\infty$ we get 
		\[
		\lim_k F(E_k,X) \ge \sum_i F(E^i, X). 
		\]
		as desired.
	\end{proof}
	
	\begin{corollary}\label{co_comp_semicont1}
		Let $(X,d,\mu)$, $E_k$, 
		$E^i$, $I$, $g_k^i$ be as in Theorem~\ref{th:comp_semicont}.
		If $G$ acts on $X$ properly discontinuously, then
		\begin{align*}
		\sum_i P(E^i) &\le \liminf_k P(E_k),\\
		\sum_i \mu(E^i) &\le \liminf_k \mu(E_k).
		\end{align*} 
	\end{corollary}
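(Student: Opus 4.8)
The plan is to obtain both inequalities as instances of Theorem~\ref{th:comp_semicont}: it suffices to exhibit two functionals $F\colon\mathcal{B}(X)\times\mathcal{A}(X)\to[0,+\infty]$, namely $F=P$ and $F=\mu(\cdot\cap\cdot)$, each satisfying the five structural axioms (semicontinuity, Beppo Levi, monotonicity, superadditivity, $G$-invariance) required there, and then to apply the theorem to each of them. Although Theorem~\ref{th:comp_semicont} is stated for $\R$-valued $F$, its proof manipulates only sums, suprema and $\liminf$'s of nonnegative quantities, so it carries over verbatim to $[0,+\infty]$-valued functionals; alternatively, when the right-hand side is finite one may first pass to a subsequence along which $\sup_k F(E_k,X)<+\infty$.

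For the first inequality take $F:=P$. Here there is essentially nothing to verify: the five axioms are exactly the defining properties of a relative perimeter functional listed in Section~\ref{sec:notation}. Theorem~\ref{th:comp_semicont}, applied under hypotheses~\eqref{eq_diffg1}--\eqref{eq_convE1}, then gives $\sum_i P(E^i)=\sum_i P(E^i,X)\le\liminf_k P(E_k,X)=\liminf_k P(E_k)$.

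For the second inequality take $F(B,U):=\mu(B\cap U)$. Monotonicity and superadditivity are trivial (in fact $\mu(B\cap\bigsqcup_i U_i)=\sum_i\mu(B\cap U_i)$), $G$-invariance holds because $G$ acts by $\mu$-preserving homeomorphisms, so $\mu(gB\cap gU)=\mu(g(B\cap U))=\mu(B\cap U)$, and the Beppo Levi property is just continuity of $\mu$ from below along $U_k\nearrow X$. The only point that needs a genuine argument — and the main (essentially the only) obstacle — is semicontinuity of $U\mapsto\mu(\,\cdot\cap U)$ under $L^1_\loc(\mu)$ convergence, since $U$ is open but possibly not precompact. To handle it I would exhaust $U$ by an increasing sequence of compact sets $K_j\subset U$ (available because $(X,d)$ has the Heine--Borel property, hence is $\sigma$-compact), observe that $B_k\to B$ in $L^1_\loc(\mu)$ implies $\int_{K_j}|\mathbf{1}_{B_k}-\mathbf{1}_B|\,d\mu\to0$ and therefore $\mu(B_k\cap K_j)\to\mu(B\cap K_j)$ for each fixed $j$, whence $\liminf_k\mu(B_k\cap U)\ge\liminf_k\mu(B_k\cap K_j)=\mu(B\cap K_j)$, and finally let $j\to\infty$ using $\mu(B\cap K_j)\nearrow\mu(B\cap U)$. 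Once semicontinuity is established, Theorem~\ref{th:comp_semicont} applied to this $F$ yields $\sum_i\mu(E^i)\le\liminf_k\mu(E_k)$, completing the proof.
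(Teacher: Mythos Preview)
Your proposal is correct and follows exactly the paper's approach: the paper's proof is the single sentence ``Apply Theorem~\ref{th:comp_semicont} with $F(B,U):=P(E,U)$, and then with $F(B,U):=\mu(E\cap U)$.'' You have simply supplied the axiom verifications (in particular the semicontinuity of $(B,U)\mapsto\mu(B\cap U)$ via a compact exhaustion of $U$) that the paper leaves implicit.
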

	
	\begin{proof}
		Apply Theorem~\ref{th:comp_semicont} with $F(B,U):=P(E,U)$, and then with $F(B,U):=\mu(E\cap U)$.
	\end{proof}
	
	\subsection{Compactness}
	
	The following theorem is our main technical tool to prove the existence of generalized isoperimetric clusters.
	
	\begin{theorem}[concentration compactness]
		\label{th:comp_comp}
		Assume that $(X,d)$ has Heine-Borel property.
		Let 
		\[E_k\in \mathcal{B}(X), \quad \mu(E_k)=m,\quad \sup_k P(E_k) = P < +\infty.\]
		Suppose that 
		\begin{itemize}
			\item[(i)] $G$ act on $X$ properly discontinuously,
			\item[(ii)] there is a precompact $B\in \mathcal{B}(X)$ with $\mu(\partial B)=0$, %and an open $U\subset B$ with $\mu(B\setminus U)=0$, 
			such that 
			$GB=X$ and $\mu(g B \cap B)=0$ for all $g\in G$ except $g=1$.
			%$\mu(g U \cap U)=0$ for all $g\in G$ except $g=1$.
		\end{itemize}
		Assume further that 
		\begin{itemize}
			\item[(iii)] there is an $\varepsilon>0$ and a precompact open $V\subset X$ with %$U\subset V$ 
			$B\subset V$ such that 
			%for some $\alpha \in (0,1)$ and $C>0$ 
			the local isoperimetric inequality 
			%\begin{equation}
			%\label{eq_relisoperim1a}
			%	P(E,V)\geq C\mu(E\cap V)^\alpha
			%\end{equation}	
			\begin{equation}
			\label{eq_relisoperim1a}
			P(E,V)\geq f(\mu(E\cap V))
			\end{equation}	
			holds for all Borel $E\subset X$ %and open precompact $V\subset X$ 
			with $\mu(E\cap V)\leq \varepsilon$ with some nondecreasing function $f\colon\R^+\to \R^+$ such that
			$f(0)=0$ and $f'(0)=+\infty$.
		\end{itemize}		
		Then there are: a subsequence $E_k$, some Borel sets
		$E^i\subset X$, 
		and $g_k^i\in G$, $i\in I$, $I$ at most countable, 
		such that
		\begin{gather}
		\label{eq_diffg2}
		\lim_k (g_k^{i'})^{-1}g_k^i = +\infty \qquad \mbox{for all $i\neq i'$}\\
		\label{eq_convE2}	
		(g_k^i)^{-1} E_k \stackrel{L^1_\loc(\mu)}\longrightarrow E^i,
		\qquad k\to +\infty\\
		\label{eq_convE3}	
		\sum_i \mu(E^i) = m.
		\end{gather}
	\end{theorem}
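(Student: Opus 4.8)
The plan is to run a concentration--compactness iteration on the discrete group $G$, using the tile $B$ from assumption (ii) to replace each $E_k$ by a mass distribution $g\mapsto\mu(E_k\cap gB)=\mu((g^{-1}E_k)\cap B)$ on $G$, whose total mass is $m$ because $GB=X$ and the translates $gB$ overlap pairwise only on $\mu$-null sets. The engine of the argument is the local isoperimetric inequality (iii): it forbids ``vanishing'', i.e.\ it prevents the mass of $E_k$ from being spread over arbitrarily many tiles each carrying a negligible amount. To make this quantitative I would first record the elementary consequences of Heine--Borel together with proper discontinuity of the action: since $\bar V$ is compact, $\{h\in G:hV\cap V\neq\emptyset\}$ is finite of some cardinality $\Lambda$, so every point of $X$ lies in at most $\Lambda$ of the translates $\{gV\}_{g\in G}$ and $G$ splits into $\Lambda$ classes on each of which the $gV$ are pairwise disjoint. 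With superadditivity and monotonicity of $P$ this gives $\sum_{g\in G}P(E_k,gV)\le\Lambda P$, and with the null overlap it gives $\sum_{g\in G}\mu(E_k\cap gV)\le\Lambda m$.

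Now fix a threshold $\delta\in(0,\eps]$. On every ``light'' tile, i.e.\ one with $\mu(E_k\cap gV)\le\delta$, the hypothesis $f'(0)=+\infty$ yields $\mu(E_k\cap gB)\le\mu(E_k\cap gV)\le C(\delta)^{-1}f(\mu(E_k\cap gV))\le C(\delta)^{-1}P(E_k,gV)$ with $C(\delta)\to+\infty$ as $\delta\to0$; summing, the mass $E_k$ puts on light tiles is at most $\Lambda P/C(\delta)$, which becomes $\le m/2$ once $\delta$ is small. Since there are at most $\Lambda m/\delta$ ``heavy'' tiles (those with $\mu(E_k\cap gV)>\delta$), a pigeonhole argument produces, for every $k$, some $g_k^1\in G$ with $\mu((g_k^1)^{-1}E_k\cap B)\ge\eta_1$, where $\eta_1>0$ depends only on $m$, $P$ and the fixed data, not on $k$. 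Applying the compactness axiom on an exhaustion $X=\bigcup_n B_n(x_0)$ (the relevant perimeters are $\le P$ by monotonicity and $G$-invariance) and diagonalising, we pass to a subsequence along which $(g_k^1)^{-1}E_k\to E^1$ in $L^1_\loc(\mu)$, and since $B$ is precompact, $\mu(E^1)\ge\mu(E^1\cap B)=\lim_k\mu((g_k^1)^{-1}E_k\cap B)\ge\eta_1>0$.

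Suppose now $g_k^1,\dots,g_k^n$ and $E^1,\dots,E^n$ have been produced along a subsequence so that \eqref{eq_diffg2} and \eqref{eq_convE2} hold for indices $\le n$. By Corollary~\ref{co_comp_semicont1}, $\sum_{i\le n}\mu(E^i)\le m$, so the residual $r_n:=m-\sum_{i\le n}\mu(E^i)$ is $\ge0$; if $r_n=0$ we stop. Otherwise, fix a finite $S\subset G$ and set $B_S:=\bigcup_{h\in S}hB$, a precompact set. For $k$ large the sets $g_k^iB_S$ ($i\le n$) are pairwise disjoint and $\mu\bigl(E_k\cap\bigcup_i g_k^iB_S\bigr)=\sum_i\mu((g_k^i)^{-1}E_k\cap B_S)\to\sum_i\mu(E^i\cap B_S)\le m-r_n$, so the mass carried by the ``far'' tiles, i.e.\ those $gB$ with $(g_k^i)^{-1}g\notin S$ for all $i\le n$, is at least $r_n/2$ for $k$ large. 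Running the light/heavy dichotomy of the previous paragraph restricted to the far tiles produces $g_k^{n+1}$ with $(g_k^i)^{-1}g_k^{n+1}\notin S$ for all $i\le n$ and $\mu((g_k^{n+1})^{-1}E_k\cap B)\ge\eta_{n+1}$, where $\eta_{n+1}>0$ depends only on $r_n$ and the data, uniformly in $S$ and $k$. Letting $S$ run through an increasing exhaustion of $G$ and diagonalising first in $S$ and $k$, then once more over the exhaustion of $X$ through the compactness axiom, we obtain a further subsequence along which $(g_k^i)^{-1}g_k^{n+1}\to+\infty$ for every $i\le n$, $(g_k^{n+1})^{-1}E_k\to E^{n+1}$ in $L^1_\loc(\mu)$, and $\mu(E^{n+1})\ge\eta_{n+1}>0$; together with the previous data this gives \eqref{eq_diffg2} and \eqref{eq_convE2} for indices $\le n+1$.

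It remains to close the iteration. The residuals $r_n$ are nonincreasing and $\ge0$, so $r_n\downarrow r_\infty\ge0$. If $r_\infty>0$, the lower bounds $\eta_{n+1}$, being monotone in $r_n\ge r_\infty$, stay above a fixed $\bar\eta>0$; then the process never terminates and $\sum_i\mu(E^i)\ge\sum_n\bar\eta=+\infty$, contradicting $\sum_{i\le n}\mu(E^i)\le m$ from Corollary~\ref{co_comp_semicont1}. Hence $r_\infty=0$, which is exactly \eqref{eq_convE3}; a final diagonal subsequence over the stages secures \eqref{eq_diffg2} and \eqref{eq_convE2} simultaneously for all $i\in I$, with $I$ finite if the iteration stopped and $I=\N$ otherwise. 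The main obstacle I anticipate is organisational: keeping the two nested diagonal extractions in the iterative step honest --- over the separating set $S\nearrow G$, which upgrades ``$(g_k^i)^{-1}g_k^{n+1}$ eventually outside $S$'' to \eqref{eq_diffg2}, and over the stage index $n$ --- while ensuring that both the mass-splitting estimate ``far tiles carry at least $r_n/2$'' and the no-vanishing bound $\eta_{n+1}=\eta_{n+1}(r_n)$ are genuinely uniform in $S$ and $k$. The conceptual heart, by contrast, is the short but essential use of (iii) together with the bounded overlap of $\{gV\}$ to rule out vanishing.
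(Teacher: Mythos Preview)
Your argument is correct and follows the classical iterative Lions--style concentration--compactness scheme: extract one profile at a time using a ``no-vanishing'' bound, remove it, and repeat until the residual mass $r_n$ tends to zero. The paper, by contrast, proceeds in a single pass: for each $k$ it enumerates $G$ as $(h_k^j)_j$ so that $j\mapsto\mu(E_k\cap h_k^j V)$ is nonincreasing, extracts limits $F^j$ of $(h_k^j)^{-1}E_k\cap U$ for \emph{all} $j$ simultaneously, shows $\sum_j\mu(F^j)=m$ via a uniform tail estimate (your light--tile bound, reorganised as equisummability, Lemma~\ref{lm:equisummability}), and then groups the indices $j$ into profiles through an equivalence relation $j\sim j'\Leftrightarrow\{(h_k^{j'})^{-1}h_k^j:k\}$ is finite, setting $g_k^i:=h_k^{\min i}$. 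Both proofs share the same conceptual core---the bounded overlap of $\{gV\}$ combined with~(iii) forbids vanishing---but package the extraction differently. Your route makes the no-vanishing mechanism and the dependence $\eta_{n+1}=\eta_{n+1}(r_n)$ very transparent, at the cost of the nested diagonalisations you flag (over $S\nearrow G$ to upgrade separation to~\eqref{eq_diffg2}, over the compact exhaustion for $L^1_\loc$ compactness, and over the stage index $n$); the paper's all-at-once ordering avoids the iteration and the $S$-diagonalisation entirely, but trades them for the bookkeeping of the equivalence classes and the passage from the tile-limits $F^j$ to the full profiles $E^i$ in its Step~3.
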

	
	\begin{remark}\label{rm_isoperim1}
		In many examples condition~(iii) of the above Theorem~\ref{th:comp_comp} is verified with $f(t):=Ct^\alpha$ for some $\alpha \in (0,1)$ and $C>0$, namely,~\eqref{eq_relisoperim1a} reads as 
		the local isoperimetric inequality 
		\begin{equation}
		\label{eq_relisoperim1}
		P(E,V)\geq C\mu(E\cap V)^\alpha.
		\end{equation}	
	\end{remark}
	
	\begin{proof}
		%For each $k$ let $h_k^j$ be an enumeration of $G$ 
		%such that setting $Q_k^j := h_k^j U$, 
		%the sequence  
		%$j\mapsto \mu(E_k \cap Q_k^j)$ is decreasing. Note that 
		%$Q_k^j$ are open sets since $G$ acts by homeomorphisms.
		For each $k$ let $h_k^j$ be an enumeration of $G$ 
		such that  
		the sequence  
		$j\mapsto \mu(E_k \cap h_k^j V)$ is not-increasing. Let $U$ stand for the interior of $B$. 
		One observes that $U\neq \emptyset$, because $\mu(B)>0$ (otherwise from $GB=X$ one would have $\mu(X)=0$), and $\mu(\partial B)=0$ hence $\mu(U)= \mu(B\setminus \partial B)>0$.
		
		Note that 
		$Q_k^j:= h_k^j U$ are open sets since $G$ acts by homeomorphisms.
		If there are infinitely many sets $h_k^j V$ in which $E_k$ 
		has positive measure we avoid to enumerate those 
		with zero measure.
		
		By the compactness property of the perimeter functional,
		for each $j$ we may suppose that up to a subsequence
		$((h_k^j)^{-1}E_k) \cap U$ converges in $L^1(\mu)$ as $k\to +\infty$ 
		and hence there exists $F^j\subset U$ such that 
		\begin{equation}\label{eq_gkjEk1}
		(h_k^j)^{-1} (E_k \cap Q_k^j) = ((h_k^j)^{-1}E_k)\cap  U \stackrel{L^1(\mu)}\longrightarrow F^j,
		\qquad \text{as $k\to \infty$}.
		\end{equation}
		By a diagonal argument we may choose a single subsequence of $k$ (not relabeled) such that~\eqref{eq_gkjEk1} holds for all $j\in \N$.
		The rest of the proof is divided in several steps.
		
		{\sc Step 1}. We claim that $\sum_j \mu(F^j) = m$.
		
		To show the claim, we first note that
		since the action of $G$ on $X$ is properly discontinuous, we have 
		\[
		\nu:=\#\{g\in G\colon g\bar V\cap \bar V\neq \emptyset\} <\infty.
		\]
		Applying Lemma~\ref{lm_groupPart1} with $K:=\bar V$
		one has a partition of $G$ is at most $\nu$ disjoint subsets $\mathcal{F}_j$ such that the sets 
		$h\bar V$ are mutually disjoint for all $h\in \mathcal{F}_j$.
		Thus for each fixed $k\in \N$ 
		one can find a finite partition $\N = \bigsqcup_{\lambda=1}^\nu J_{k,\lambda}$ of all indices $j$ 
		such that for $j \in J_{k,\lambda}$ the sets
		$h_k^j \bar V$ are pairwise disjoint, by setting
		$J_{k,\lambda} := \{j\in \N\colon h_k^j \in \mathcal{F}_\lambda\}$.
		
		Since $j\mapsto \mu(E_k\cap h_k^j(V))$ is non-increasing, we have
		\begin{equation}\label{eq_bdmu1}
		\begin{aligned}
		%\mu(E_k \cap Q_k^n) 
		%\leq 
		\mu(E_k \cap h_k^n V)
		&\leq \frac 1 n \sum_{j=1}^n \mu(E_k \cap h_k^j V) 
		\leq \frac 1 n \sum_{j=1}^{+\infty} \mu(E_k \cap h_k^j V) \\
		&=
		\frac 1 n \sum_{\lambda =1}^\nu \sum_{j\in J_{k,\lambda}} \mu(E_k \cap h_k^j V)
		\leq \frac \nu n \mu(E_k) = \frac{\nu m}{n}.
		\end{aligned}
		\end{equation}
		In particular, by~\eqref{eq_bdmu1} for every $\delta>0$
		one can choose an
		$N=N(\varepsilon,\delta )\in \N$ such that for all $j\ge N$ we have 
		\begin{eqnarray}
		\label{eq_mukeps1}
		\mu(E_k \cap h_k^j V)  \leq \varepsilon,&\quad\text{hence also}\\
		\label{eq_mukeps2}
		\mu(E_k \cap Q_k^j)  \leq \varepsilon,
		\end{eqnarray}
		%and therefore also
		%\begin{equation}\label{eq_mukeps2}
		%%\mu(((h_k^j)^{-1}E_k) \cap V) =
		%\mu(E_k \cap Q_k^j) \leq \varepsilon,
		%\end{equation}
		and, moreover,
		\begin{equation}\label{eq_mukeps3}
		\frac{\mu(E_k \cap Q_k^j)}{f(\mu(E_k \cap Q_k^j))}\leq \delta.
		\end{equation}
		Thus for such $j\in \N$, using~\eqref{eq_mukeps2},~(iii) and the invariance of perimeter,
		we get
		%\begin{equation}\label{eq_bdper1}
		%\begin{aligned}
		%\mu(E_k \cap Q_k^j)^\alpha
		%%	\mu(E_k \cap h_k^j V)^\alpha  
		%&\le \mu((h_k^j)^{-1}E_k \cap V)^\alpha \\
		%&\leq \frac 1 C P((h_k^j)^{-1}E_k ,V) 
		%%\quad\mbox{by~\eqref{eq_relisoperim1} }
		%=
		%\frac 1 C P(E_k ,h_k^j V) %\quad\mbox{by $G$-invariance of perimeter} 
		%.
		%\end{aligned}
		%\end{equation}
		\begin{equation}\label{eq_bdper1}
		\begin{aligned}
		f(\mu(E_k \cap Q_k^j))
		&\le f(\mu((h_k^j)^{-1}E_k \cap V))\\
		&\leq P((h_k^j)^{-1}E_k ,V) 
		%\quad\mbox{by~\eqref{eq_relisoperim1} }
		=
		P(E_k ,h_k^j V) %\quad\mbox{by $G$-invariance of perimeter} 
		.
		\end{aligned}
		\end{equation}
		%\begin{equation}\label{eq_bdper1}
		%\mu(E_k \cap Q_k^j)^\alpha\leq \frac 1 C P(E_k,Q_k^j) .
		%\end{equation}
		Therefore, for $n\geq N$ we have
		%\begin{equation*}\label{eq_summu1}
		%\begin{aligned}
		%\sum_{j=n}^{+\infty} \mu(E_k \cap Q_k^j)
		%%&= \sum_{j=n}^{+\infty} \mu(E_k \cap Q_k^j)^{1-\alpha} 
		%%\mu(E_k \cap Q_k^j)^{\alpha} \\
		%&\le \sum_{j=n}^{+\infty} \mu(E_k \cap Q_k^j)^{1-\alpha} 
		%\mu(E_k \cap h_k^j V)^{\alpha} \\
		%&\le \frac 1 C  \sum_{j=n}^{+\infty} \mu(E_k \cap Q_k^j)^{1-\alpha} 
		%P(E_k,h_k^jV) \quad\mbox{[by~\eqref{eq_bdper1}]}\\
		%&\le \frac 1 C \left(\frac {\nu m} n\right)^{1-\alpha}
		%\sum_{j=n}^{+\infty} P(E_k,h_k^j V) \quad\mbox{[by~\eqref{eq_bdmu1}]}\\
		%&\le \frac 1 C \left(\frac {\nu m} n\right)^{1-\alpha}
		%\sum_{\lambda=1}^\nu \sum_{j\in J_{k,\lambda}} P(E_k,h_k^j V) \\
		%& = \frac 1 C \left(\frac {\nu m} n\right)^{1-\alpha}
		%\sum_{\lambda=1}^\nu  P(E_k,\bigsqcup_{j\in J_{k,\lambda}} h_k^j V) \quad\mbox{[by additivity of perimeter]}\\ 
		%& \leq \frac 1 C \left(\frac {\nu m} n\right)^{1-\alpha}
		%\sum_{\lambda=1}^\nu P(E_k)\quad\mbox{[by monotonicity of perimeter]}\\
		%& = \frac \nu C \left(\frac {\nu m} n\right)^{1-\alpha}
		%P(E_k) \leq \frac \nu C \left(\frac {\nu m} n\right)^{1-\alpha}P.
		%\end{aligned}
		%\end{equation*}
		\begin{equation*}\label{eq_summu1}
		\begin{aligned}
		\sum_{j=n}^{+\infty} \mu(E_k \cap Q_k^j)
		&\le \sum_{j=n}^{+\infty} \frac{\mu(E_k \cap Q_k^j)}{f(\mu(E_k \cap Q_k^j))} 
		f(\mu(E_k \cap Q_k^j)) \\
		&\le \sum_{j=n}^{+\infty} \frac{\mu(E_k \cap Q_k^j)}{f(\mu(E_k \cap Q_k^j))}
		P(E_k,h_k^jV) \quad\mbox{[by~\eqref{eq_bdper1}]}\\
		&\le \delta
		\sum_{j=n}^{+\infty} P(E_k,h_k^j V) \quad\mbox{[by~\eqref{eq_mukeps3}]}\\
		&\le \delta
		\sum_{\lambda=1}^\nu \sum_{j\in J_{k,\lambda}} P(E_k,h_k^j V) \\
		& \leq \delta
		\sum_{\lambda=1}^\nu  P(E_k,\bigsqcup_{j\in J_{k,\lambda}} h_k^j V) \quad\mbox{[by superadditivity of perimeter]}\\ 
		& \leq \delta
		\sum_{\lambda=1}^\nu P(E_k)\quad\mbox{[by monotonicity of perimeter]}\\
		& =  \nu \delta
		P(E_k) \leq  \nu\delta P.
		\end{aligned}
		\end{equation*}
		Since $\delta>0$ is arbitrary,
		we get
		\[	
		\lim_{n\to +\infty}	
		\sup_k \sum_{j=n}^{+\infty} \mu(E_k \cap Q_k^j) = 0.
		\]
		But we also have 
		\[\mu(F^j) = \lim_k \mu(E_k \cap Q_k^j).\]
		Apply Lemma~\ref{lm:equisummability} with 
		$m_j:=\mu(F^j)$, $m_{k,j}:=\mu(E_k \cap Q_k^j)$,
		recalling that
		\begin{align*}
		\sum_i m_{k,j} 
		%&= \mu\big( E_k \cap \bigcup_j Q_k^j\big)
		&= \mu\big( E_k \cap \bigcup_j h_k^j U \big) 
		\qquad \text{[since $\mu(hU\cap U)=0$ for all $h\in G$]}
		\\
		&= \mu\big( E_k \cap \bigcup_j h_k^j B \big) 
		\quad\text{[since $\mu(h(B\setminus U))=\mu(B\setminus U)=0$ for all $h\in G$]}\\ 
		&=\mu(E_k)=m\qquad \text{[since $GB=X$]},
		\end{align*}
		to obtain $\sum_j \mu(F^j) = m$ as claimed.

		{\sc Step 2: construction of $g_k^i$ and $E^i$}. 
		Define on $\N$ an equivalence relation by letting 
		$j\sim j'$ whenever the set $\{(h_k^{j'})^{-1}h_k^j\colon k\in \N\}\subset G$ is finite
		and let $I:=\N/{\sim}$ be the quotient set.
		For each $i\in I$ let $\underline{i}:= \min i$.
		Passing to a subsequence in $k$ we might and shall suppose that for all $i\in I$ and all $j\in i$ 
		the sequence $\{(h_k^{\underline i})^{-1} h_k^j\}_k$ (which by assumption assumes a finite number of values) is actually constant, which will be denoted $h^j\in G$.
		
		Clearly for $j\in i$ all $h^j$ are distinct
		because so are $h_k^j\in G$.
		Define $g_k^i := h_k^{\underline{i}}$ 
		and suppose that, up to a subsequence,
		the sets $(g_k^i)^{-1}E_k$ converge in $L^1_\loc(\mu)$ 
		to some Borel set $E^i$ as $k\to +\infty$.
		Hence~\eqref{eq_diffg2} and~\eqref{eq_convE2} hold by construction.
		
		{\sc Step 3}.  It remains to prove~\eqref{eq_convE3}. To this aim observe that from~\eqref{eq_gkjEk1} with  $j\in i$ we get
		\begin{align*}
		\mu(F^j) &\leftarrow
		\mu(E_k \cap Q_k^j)=  \mu\left((g_k^i)^{-1} (E_k \cap Q_k^j)   \right) \\
		&= \mu\left((g_k^i)^{-1} E_k \cap (g_k^i)^{-1} h_k^j U\right)\\
		&= \mu\left((g_k^i)^{-1} E_k \cap h^j U\right)\\
		&\to  \mu\left(E^i\cap h^j U\right) 
		\end{align*}
		as $k\to\infty$, so that $\mu(F^j) =\mu(E^i\cap h^j U)$ whenever $j\in i$.
		Therefore
		\begin{align*}
		\lim_k \mu(E_k) &= m = \sum_j \mu(F^j) \quad\text{[by Step~1]}\\
		& = \sum_i \sum_{j\in i} \mu(E^i\cap h^j U) = \sum_i \mu\big(E^i\cap \bigcup_{j\in i}h^jU\big) \leq \sum_i \mu\left(E^i\right),
		\end{align*}
		the reverse inequality coming from Theorem~\ref{th:comp_semicont}.
	\end{proof}

	\section{Compactness and semicontinuity for sequences of clusters}
	\label{sec:compclusters}
	
	The following definition of an isoperimetric cluster is an obvious extension 
	of the classical one provided in the Introduction from a Euclidean to a general metric measure space.
	
	\begin{definition}[isoperimetric clusters]
		We say that $\vec E = (E_j)_{j=1}^N$ with $N\in \N\cup\{\infty\}$, is an $N$-cluster in $X$ 
		if each $E_j\subset X$ is a Borel set and
		$\mu(E_i\cap E_j)=0$ for all $i\neq j$. 
		If $N$ is finite we have $\vec E = (E_1,\dots, E_N)$, if $N=\infty$ 
		we have $\vec E = (E_1,\dots,E_n,\dots)$.
		We set
		\begin{align*}
		%	E_0 &:= X \setminus \bigcup_j E_j\\
		\vec m(\vec E) &:= \enclose{\mu(E_j)}_{j=1}^N \\
		P(\vec E) &:= %\frac 1 2 \sum_{j\geq 0} P(E_j)= 
		\frac 1 2 \sum_{j\geq 1} P(E_j)+
		%	\frac 1 2 P\big(X\setminus\bigcup_{j\geq 1} E_j\big)
		\frac 1 2 P\left(\bigcup_{j\geq 1} E_j\right)
		\end{align*}
		An $N$-cluster $\vec E$
		will be called
		% 
		% 	\begin{itemize}
		% 		\item[(i)] 
		\emph{minimal},  or \textit{isoperimetric}, if 
		\[
		P(\vec E) = \inf\ENCLOSE{P(\vec F)\colon \vec m(\vec F)=\vec m(\vec E)}.  
		\]
		%	\item[(ii)]  \emph{weakly minimal}, if 
		%	\[
		%	P(\vec E) = \inf\ENCLOSE{P(\vec F)\colon \vec m(\vec F)\ge\vec m(\vec E)}.  
		%	\]
		%	\end{itemize}
		%	We say that an $N$-cluster $\vec E$ is \emph{decomposable}
		%	if $\vec E = \vec F \cup \vec G$ (i.e.\ $E_k = F_k \cup G_k$) 
		%	with 
		%	\begin{align*}
		%	\vec m(\vec E) &= \vec m(\vec F) + \vec m(\vec G)\\
		%	P(\vec E) &= P(\vec F) + P(\vec G)\\
		%	\vec m(\vec F) &\neq \vec 0,\qquad
		%	\vec m(\vec G) \neq \vec 0
		%	\end{align*}
	\end{definition}
	
	\begin{theorem}[concentration compactness for clusters]\label{th_comp_clust}
		Assume that $(X,d)$ has Heine-Borel property.
		Let $\vec E_k$ be a sequence of $N$-clusters in $X$, 
		$N\in \N \cup \{\infty\}$,
		with 
		\[\vec m(\vec E_k) = \vec m\in \R^N\quad\mbox{and }
		P(\vec E_k)\le P <+\infty.
		\]	
		Then under conditions~(i),~(ii) and~(iii) of Theorem~\ref{th:comp_comp} 
		there exist  $N$-clusters $\vec E^i$ 
		with $i\in I$, $I$ at most countable, such that 
		\begin{gather*}
		\sum_i \vec m(\vec E^i) = \vec m\\
		\sum_i P(\vec E^i) \le \liminf_k P(\vec E_k)
		\end{gather*}
		and there exist $g_k^i\in G$ such that up to a subsequence
		\[
		(g_k^i)^{-1}(E_k)_j  \to (E^i)_j\qquad \text{as $k\to +\infty$ in $L^1_\loc(\mu)$}  
		\]
		for all $j=1,\ldots, N$ and $i\in I$.
	\end{theorem}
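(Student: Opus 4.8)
The plan is to deduce the statement from the single-set result Theorem~\ref{th:comp_comp} applied to the unions $U_k:=\bigcup_{j\ge 1}(E_k)_j$, and then to handle the components of the cluster one at a time via the semicontinuity Corollary~\ref{co_comp_semicont1}. First I would record the elementary bounds: since the sets $(E_k)_j$ are $\mu$-essentially disjoint, $m:=\sum_{j\ge 1}m_j=\sum_{j\ge1}\mu((E_k)_j)=\mu(U_k)$ is finite and independent of $k$; and from $P(\vec E_k)=\tfrac12\sum_j P((E_k)_j)+\tfrac12 P(U_k)$ together with $P\ge 0$ one reads off $P(U_k)\le 2P(\vec E_k)\le 2P$ and $P((E_k)_j)\le 2P$ for every $j$. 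Hence Theorem~\ref{th:comp_comp} applies to the sequence $U_k$ and produces, along a subsequence, Borel sets $E^i\subset X$ and $g_k^i\in G$, $i\in I$ at most countable, with $\lim_k (g_k^{i'})^{-1}g_k^i=+\infty$ for $i\ne i'$, with $(g_k^i)^{-1}U_k\to E^i$ in $L^1_\loc(\mu)$, and with $\sum_i\mu(E^i)=m$.

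Keeping these $g_k^i$ fixed, I would then extract a further subsequence --- by a diagonal argument over the countably many pairs $(i,j)$ and over an exhaustion $X=\bigcup_\ell B_\ell(x_0)$ by precompact open balls, which exists because $(X,d)$ has the Heine-Borel property and is thus $\sigma$-compact --- so that $(g_k^i)^{-1}(E_k)_j\to(E^i)_j$ in $L^1_\loc(\mu)$ for every $i\in I$ and every $j$. This step uses only the compactness axiom for $P$, applied to the sequence $(g_k^i)^{-1}(E_k)_j$ on each ball $B_\ell(x_0)$, together with the bound $P((g_k^i)^{-1}(E_k)_j,B_\ell(x_0))=P((E_k)_j,g_k^iB_\ell(x_0))\le P((E_k)_j)\le 2P$ coming from $G$-invariance and monotonicity. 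I then set $\vec E^i:=((E^i)_j)_{j=1}^N$. To see that each $\vec E^i$ is an $N$-cluster and that its pieces exhaust $E^i$, note that $\mathbf{1}_{(g_k^i)^{-1}U_k}=\sum_{j\ge1}\mathbf{1}_{(g_k^i)^{-1}(E_k)_j}$, with uniform-in-$k$ tail control $\Abs{\sum_{j>J}\mathbf{1}_{(g_k^i)^{-1}(E_k)_j}}_{L^1(B_\ell(x_0))}\le\sum_{j>J}m_j\to0$ as $J\to\infty$ (here the finiteness of $m=\sum_j m_j$ is used crucially when $N=\infty$); passing to the limit in $k$ yields $\mathbf{1}_{E^i}=\sum_{j\ge1}\mathbf{1}_{(E^i)_j}$ $\mu$-a.e., so the $(E^i)_j$ are $\mu$-essentially disjoint, $E^i=\bigcup_j(E^i)_j$ up to $\mu$-null sets, and $\mu(E^i)=\sum_j\mu((E^i)_j)$.

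It then remains to verify the two displayed conclusions. Corollary~\ref{co_comp_semicont1}, applied for each fixed $j$ to the sequence $(E_k)_j$ with the above $g_k^i$ and limits $(E^i)_j$, gives $\sum_i\mu((E^i)_j)\le\liminf_k\mu((E_k)_j)=m_j$ and $\sum_i P((E^i)_j)\le\liminf_k P((E_k)_j)$; applied to $U_k$ it gives $\sum_i P(E^i)\le\liminf_k P(U_k)$. Summing the mass inequality over $j$ and comparing with $\sum_i\mu(E^i)=m=\sum_j m_j$ and $\mu(E^i)=\sum_j\mu((E^i)_j)$ from the previous paragraph forces equality, i.e. $\sum_i\mu((E^i)_j)=m_j$ for every $j$, which is $\sum_i\vec m(\vec E^i)=\vec m$. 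For the perimeters, since $P(\bigcup_j(E^i)_j)=P(E^i)$ (the two sets agreeing up to $\mu$-null sets, and $P$ being null-invariant by semicontinuity) we have $\sum_i P(\vec E^i)=\tfrac12\sum_j\sum_i P((E^i)_j)+\tfrac12\sum_i P(E^i)$; bounding each sum by the corresponding $\liminf_k$ (using Fatou's lemma for series on the double sum) and then invoking superadditivity of $\liminf$ gives $\sum_i P(\vec E^i)\le\liminf_k\bigl(\tfrac12\sum_j P((E_k)_j)+\tfrac12 P(U_k)\bigr)=\liminf_k P(\vec E_k)$.

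The main obstacle is the \emph{componentwise} mass conservation $\sum_i\vec m(\vec E^i)=\vec m$. Corollary~\ref{co_comp_semicont1} alone only delivers ``$\le$'' in each component; one upgrades this to equality only by playing it off against the \emph{global} identity $\sum_i\mu(E^i)=m$ furnished by Theorem~\ref{th:comp_comp}, and for that one genuinely needs the equi-summability step identifying $E^i$ with $\bigcup_j(E^i)_j$ with additive masses --- trivial for finite $N$, but requiring $\sum_j m_j<\infty$ when $N=\infty$. Everything else is routine bookkeeping with nested subsequences, the one point to be careful about being that a single family $(g_k^i)$ and a single subsequence must serve all components of the cluster at once.
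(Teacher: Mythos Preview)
Your proposal is correct and follows essentially the same route as the paper: apply Theorem~\ref{th:comp_comp} to the unions $U_k=\bigcup_j(E_k)_j$, use compactness and a diagonal argument to extract componentwise limits $(E^i)_j$ along the same $g_k^i$, identify $E^i=\bigcup_j(E^i)_j$, and then upgrade the componentwise mass inequalities from Corollary~\ref{co_comp_semicont1} to equalities by comparing with the global mass identity; the perimeter bound is obtained by summing the semicontinuity inequalities exactly as you describe. The only notable difference is that the paper packages the identification $E^i=\bigsqcup_j(E^i)_j$ into Lemma~\ref{lm:union}, whereas you spell out the uniform tail control $\sum_{j>J}m_j\to 0$ needed when $N=\infty$; your version is in fact more explicit on this point.
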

	
	\begin{proof}
		Up to a subsequence suppose that $\liminf_k P(\vec E_k) = \lim_k P(\vec E_k)$.
		Let 
		\[
		F_k := \bigsqcup_{j=1}^N (E_k)_j.
		\]
		Clearly $P(F_k)\le 2 P(\vec E_k)\leq 2P$, and hence we can apply Theorem~\ref{th:comp_comp}
		to find $F^i\subset X$, $g_k^i\in G$, $\lim_k (g_k^{i^\prime})^{-1}g_k^i= \infty$ for $i\neq i^\prime$  
		such that, up to a subsequence,
		$(g_k^i)^{-1}F_k\to F^i$ in $L^1_\loc(\mu)$ as $k\to\infty$ with 
		$\displaystyle \sum_i \mu(F^i)=\mu(F_k) = \sum_{j=1}^N m_j$. 
		Since $P((E_k)_j) \le P(\vec E_k) \le P$,
		by the compactness assumption on the perimeter functional, up to  subsequences we can 
		define the sets 
		\[E^i_j := L^1_\loc(\mu)\text{-}\lim_k (g_k^i)^{-1}(E_k)_j.\]
		By Lemma~\ref{lm:union} then 
		\[	
		F^i = \bigsqcup_{j=1}^N E^i_j.
		\]	
		By Theorem~\ref{th:comp_semicont} we have $\displaystyle \sum_i \mu(E^i_j)\le \mu((E_k)_j)$. 
		Therefore, since
		\[	
		\sum_j\sum_i \mu(E^i_j)=\sum_i\sum_j \mu(E^i_j)=\sum_i\mu(F^i)=\mu(F_k)=\sum_j \mu((E_k)_j),
		\] 
		one gets $\displaystyle \sum_i \mu(E^i_j)=\mu((E_k)_j)$.
		Hence $\vec E^i = (E^i_1,\dots,E^i_N)$ is an $M$-cluster, $M\le N$,  with	
		\begin{align*}
		\sum_i  \vec m(\vec E^i) &= \vec m,\\
		P(\vec E^i) &=\frac 1 2 \sum_{j=1}^N P(E^i_j) + \frac 1 2 P(E^i).
		\end{align*}
		Again by Theorem~\ref{th:comp_semicont}
		we have 
		\begin{align*}
		\sum_i P(F^i) &\le \liminf_k P(F_k), \\
		\sum_i P(E^i_j) &\le \liminf_k\sum_j P((E_k)_j),\qquad j=1,\dots,N
		\end{align*}
		Summing up these relationships we get
		\begin{align*}
		\sum_i P(\vec E^i) &\le \lim_k P(\vec E_k)
		\end{align*}	
		as claimed.
	\end{proof}
	
	\section{Existence of generalized isoperimeric clusters}
	\label{sec:existgencluster}
	
	We introduce now the notion of a \textit{generalized isoperimeric cluster}.
	
	\begin{definition}[generalized isoperimetric clusters]
		Let 
		$X^\infty:= \Z\times X$ %, where $X_i:=X$ for all $i\in \N$
		and let $\phi_i\colon X\to X^\infty$ be the inclusion $\phi_i(x) = (i,x)$.
		We call 
		$\vec E = (E_i)_{i=1}^N\subset X^\infty$ a generalized $N$-cluster in $X$
		with $N\in \N\cup\{\infty\}$, if for each $j\in \Z$ the vector $\vec E^j$ with components 
		$E^j_i:=\phi_j^{-1}(E_i) \subset X$, $i=1,\ldots, N$
		is a $N$-cluster in $X$. 
		We set
		\begin{align*}
		\vec m^\infty(\vec E) &:= \sum_j \vec m(\vec E^j) = \enclose{\sum_j\mu(E_1^j),\dots,\sum_j \mu(E_N^j)} \\
		P^\infty(\vec E) &:= \sum_j P(\vec E^j). 
		\end{align*}
		A generalized $N$-cluster $\vec E\in X^\infty$ will be called
		% \begin{itemize}
		% 	\item[(i)] 
		\emph{minimal},   or \textit{isoperimetric}, if 
		\[
		P^\infty(\vec E) = \inf\ENCLOSE{P^\infty(\vec F)\colon \vec m(\vec F)=\vec m(\vec E)}.  
		\]
		% 	\item[(ii)]  \emph{weakly minimal}, if 
		% 	\[
		% 	\bar P(\vec E) = \inf\ENCLOSE{\bar P(\vec F)\colon \vec m(\vec F)\ge\vec m(\vec E)}.  
		% 	\]
		% \end{itemize}
	\end{definition}
	
	It is convenient to think of a generalized cluster as a sequence of $N$-clusters $\vec E^i$ 
	drawn each on a page $X_i:=\{i\}\times X$ of the ``album'' $X^\infty$
	and $\phi_i^{-1}$ as an extraction of the $i$-th page from this album.
	
	The following immediate observation is worth mentioning.
	
	\begin{proposition}\label{prop_genclustMinpage1}
		Every page of a generalzied minimal cluster is a minimal cluster, namely, 
		for every $j\in \N$ one has that $\phi_j^{-1}(\vec E)$ is a minimal cluster. 
	\end{proposition}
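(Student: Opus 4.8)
The plan is to argue by contradiction, exploiting the additive structure of $P^\infty$ and $\vec m^\infty$ over pages. Suppose $\vec E$ is a generalized minimal cluster but some page, say page $j_0$, fails to be a minimal cluster in $X$. By definition of minimality for ordinary clusters, there is an $N$-cluster $\vec F^{j_0}$ in $X$ with $\vec m(\vec F^{j_0}) = \vec m(\vec E^{j_0})$ and $P(\vec F^{j_0}) < P(\vec E^{j_0})$. First I would build a competitor generalized cluster $\vec F$ by replacing the $j_0$-th page of $\vec E$ with $\vec F^{j_0}$ and leaving every other page untouched: concretely, set $F^j := E^j$ for $j \neq j_0$ and $F^{j_0} := \vec F^{j_0}$, and let $\vec F \subset X^\infty$ be the corresponding generalized $N$-cluster whose components are recovered page-by-page via the inclusions $\phi_j$. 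This is a legitimate generalized $N$-cluster since each of its pages is an $N$-cluster in $X$ by construction.

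Next I would verify that $\vec F$ competes with $\vec E$, i.e.\ that $\vec m^\infty(\vec F) = \vec m^\infty(\vec E)$. This follows immediately from the additivity $\vec m^\infty(\vec F) = \sum_j \vec m(\vec F^j)$ together with $\vec m(\vec F^{j_0}) = \vec m(\vec E^{j_0})$ and $\vec m(\vec F^j) = \vec m(\vec E^j)$ for $j \neq j_0$; all terms of the two sums agree term-by-term. Then I would compare perimeters: using $P^\infty(\vec F) = \sum_j P(\vec F^j)$ and cancelling the identical contributions from pages $j \neq j_0$, one obtains
\[
P^\infty(\vec F) - P^\infty(\vec E) = P(\vec F^{j_0}) - P(\vec E^{j_0}) < 0,
\]
so $P^\infty(\vec F) < P^\infty(\vec E)$, contradicting the minimality of $\vec E$. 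Hence every page of $\vec E$ is a minimal cluster, which is exactly the claim.

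The only mild subtlety — and the ``main obstacle'', such as it is — is bookkeeping: one must make sure that the series $\sum_j P(\vec F^j)$ and $\sum_j \vec m(\vec F^j)$ defining $P^\infty(\vec F)$ and $\vec m^\infty(\vec F)$ are handled correctly when finitely many terms are altered. Since only the single term indexed by $j_0$ changes, and $\vec E$ being a generalized cluster means $P^\infty(\vec E) = \sum_j P(\vec E^j)$ is a well-defined element of $\R^+\cup\{\infty\}$ (finite, in the minimizing case), the difference of the two series is literally the difference of their $j_0$-th terms; no rearrangement or convergence issue arises. The argument is therefore essentially a one-line exchange argument once the competitor is set up, and no result beyond the definitions of generalized cluster, $P^\infty$, $\vec m^\infty$, and minimality is needed.
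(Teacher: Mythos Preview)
Your proposal is correct and follows exactly the same approach as the paper's proof: both argue by contradiction, replacing a single non-minimal page $\phi_{j_0}^{-1}(\vec E)$ by a better $N$-cluster with the same masses to build a competitor $\vec F$ satisfying $\vec m^\infty(\vec F)=\vec m^\infty(\vec E)$ and $P^\infty(\vec F)<P^\infty(\vec E)$. The paper's version is terser, while your additional remarks on the bookkeeping of the series are harmless elaborations of the same one-line exchange argument.
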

	
	\begin{proof}
		If not, there is an $N$-cluster $\vec F^j$ such that 
		\[
		P(\vec F^j) <  P(\phi_j^{-1}(\vec E))
		\quad\mbox{and } \vec m(\vec F^j)=\vec m(\phi_j^{-1}(\vec E)).
		\]
		Then for the generalized cluster $\vec F$ defined by
		\begin{align*}
		\phi_k^{-1}(\vec F) & = \phi_k^{-1}(\vec E),\qquad k\neq j,\\
		\phi_j^{-1}(\vec F) & = \vec F^j, 
		\end{align*}
		one has $P^\infty(\vec F)<P^\infty(\vec E)$ contradicting the minimality of $\vec E$.
	\end{proof}
	
	We show now the existence of minimal generalized clusters. 
	
	\begin{theorem}[existence of generalized minimal clusters]\label{th_exist_generclust1}
		Assume that $(X,d)$ has Heine-Borel property.
		If conditions~(i),~(ii) and~(iii) of Theorem~\ref{th:comp_comp} hold,
		condition~(iii) being satisfied with $f\colon \R^+\to \R^+$ subadditive
		(this is true in particular when $f(t):= Ct^\alpha$, with $C>0$, $\alpha\in (0,1)$),
		then for an arbitrary $\vec m \in \R^N_+$ there exists
		a minimal generalized $N$-cluster $\vec E$ with $\vec m^\infty(\vec E)=\vec m$.
	\end{theorem}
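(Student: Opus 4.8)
The plan is to recognize a generalized $N$-cluster in $X$ as an \emph{ordinary} $N$-cluster in the ``album'' space $X^\infty=\Z\times X$, to which Theorem~\ref{th_comp_clust} can then be applied along a minimizing sequence. To this end I would first endow $X^\infty$ with the structure of a metric measure space of the type considered in the paper: the metric $\hat d$ coinciding with $d$ on each page $\{i\}\times X$ and keeping distinct pages at distance at least $1$ apart, for instance $\hat d\big((i,x),(j,y)\big)$ equal to $d(x,y)$ when $i=j$ and to $d(x,x_0)+|i-j|+d(y,x_0)$ when $i\neq j$, for a fixed basepoint $x_0$; the measure $\hat\mu:=(\text{counting measure on }\Z)\otimes\mu$; the group $\hat G:=G\times\Z$ acting by $(g,t)\cdot(i,x):=(i+t,gx)$; and the relative perimeter $\hat P(E,U):=\sum_{j\in\Z}P(E^j,U^j)$, where $E^j,U^j$ are the slices over $j$ of a Borel set $E\subset X^\infty$ and an open set $U\subset X^\infty$ (the latter being automatically the disjoint union of its open slices, since each page $\{i\}\times X$ is clopen in $(X^\infty,\hat d)$). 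Then I would check that $(X^\infty,\hat d)$ retains the Heine-Borel property (closed balls are finite unions of closed balls of the pages), that $\hat G$ acts properly discontinuously by $\hat\mu$-preserving homeomorphisms, and that $\hat P$ inherits all the axioms of Section~\ref{sec:notation} together with conditions~(i),~(ii),~(iii) of Theorem~\ref{th:comp_comp}: semicontinuity, Beppo Levi, monotonicity and the compactness property pass slice by slice using Fatou's lemma and monotone convergence for series of nonnegative terms; superadditivity and $\hat G$-invariance are immediate; and~(ii),~(iii) hold with $\hat B:=\{0\}\times B$ and $\hat V:=\{0\}\times V$, the same $\eps$ and the same $f$. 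Under this identification a generalized $N$-cluster $\vec E$ in $X$ is precisely an $N$-cluster in $X^\infty$, and $\vec m^\infty(\vec E)$, $P^\infty(\vec E)$ coincide with the mass vector and the cluster-perimeter computed in $X^\infty$.

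Next I would take a minimizing sequence $\vec E_n$ of generalized $N$-clusters with $\vec m^\infty(\vec E_n)=\vec m$ and $P^\infty(\vec E_n)\to\mathcal I:=\inf\{P^\infty(\vec F)\colon\vec m^\infty(\vec F)=\vec m\}$ (if $\mathcal I=+\infty$ there is nothing to prove, as then any generalized cluster with masses $\vec m$ is minimal). Reading the $\vec E_n$ as $N$-clusters in $X^\infty$ with constant mass vector $\vec m$ and uniformly bounded cluster-perimeter, Theorem~\ref{th_comp_clust} applied in $X^\infty$ produces $N$-clusters $\vec G^i$ in $X^\infty$ — each one a generalized $N$-cluster in $X$ — indexed by an at most countable $I$, with $\sum_i\vec m^\infty(\vec G^i)=\vec m$ and $\sum_i P^\infty(\vec G^i)\le\liminf_n P^\infty(\vec E_n)=\mathcal I$. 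Finally I would collapse the countable family $\{\vec G^i\}_{i\in I}$ into a single generalized $N$-cluster $\vec E$ by placing the $j$-th page of $\vec G^i$ onto the page of $\vec E$ indexed by $\sigma(i,j)$, for a fixed bijection $\sigma\colon I\times\Z\to\Z$. By Tonelli for nonnegative series, $\vec m^\infty(\vec E)=\sum_i\vec m^\infty(\vec G^i)=\vec m$ and $P^\infty(\vec E)=\sum_i P^\infty(\vec G^i)\le\mathcal I$; since $P^\infty(\vec E)\ge\mathcal I$ by definition of $\mathcal I$, equality holds and $\vec E$ is a minimal generalized $N$-cluster with $\vec m^\infty(\vec E)=\vec m$.

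The step I expect to require the most care is the verification in the first paragraph that $\hat P$ genuinely inherits every structural axiom on $X^\infty$ — above all the compactness property and the local isoperimetric inequality~(iii) — and that $\hat d$ really retains the Heine-Borel property while still inducing the product topology; this is routine but somewhat fiddly bookkeeping. As an alternative to the album-space reduction one may instead argue directly in $X$, first proving that the infimum of $P^\infty$ over generalized clusters with masses $\vec m$ equals the infimum of $P$ over ordinary clusters with masses $\vec m$ — this is where the subadditivity of $f$ is used, to bound, via the local isoperimetric inequality, the mass carried by the ``light'' pages of a generalized competitor so that those pages may be merged into a single ordinary cluster at arbitrarily small extra perimeter — and then applying Theorem~\ref{th_comp_clust} to an ordinary minimizing sequence.
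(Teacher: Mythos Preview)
Your proposal is correct and follows essentially the same strategy as the paper: both identify generalized clusters with ordinary clusters in the album space $X^\infty=\Z\times X$, verify that this space inherits all the perimeter axioms together with conditions~(i)--(iii) of Theorem~\ref{th:comp_comp}, apply Theorem~\ref{th_comp_clust} to a minimizing sequence, and then repackage the resulting countable family of limit clusters into a single generalized cluster via an injection $I\times\Z\to\Z$. Your choice of metric $\hat d$ is in fact cleaner than the paper's $\arctan$-based $d^\infty$, since yours genuinely has the Heine-Borel property whereas under $d^\infty$ closed balls of radius at least $\pi/2$ are all of $X^\infty$; likewise your verification of~(iii) with $\hat V=\{0\}\times V$ is the right one and shows that the isoperimetric inequality on $X^\infty$ follows page-by-page without invoking subadditivity of $f$ at that step.
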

	\begin{proof}
		Let us define over $X^\infty$ the distance 
		\[
		d^\infty(x,y) 
		:= \begin{cases}
		\arctan d(x',y'), &\text{if $x=(i,x')$ and $y=(i,y')$ for some $i\in \N$},\\
		\frac{\pi}{2}, & \text{otherwise},
		\end{cases}
		\]
		the measure $\mu^\infty$ on $X^\infty$ by
		\[
		\mu^\infty(E) := \sum_j \mu(\phi_j^{-1}(E)), \qquad E \subset X^\infty
		\]
		and define $G^\infty = \Z\times G$ the product group acting over $X^\infty$ by
		\[
		(n,g)(m,x) = (n+m, g(x))\qquad g\in G, x\in X, n,m\in \Z.
		\]
		
		Notice that $P^\infty$ and $\mu^\infty$ satisfy monotonicity, superadditivity, $G$-invariance 
		and compactness properties of section~\ref{sec:notation}.
		The semicontinuity property is simply an application of Fatou Lemma and 
		Beppo Levi property is an application of Beppo Levi theorem. 
		
		We claim that (i) of Theorem~\ref{th:comp_comp} holds.
		In fact if $K^\infty$ is a compact set in $X^\infty$ then the set $J:=\{j\in \Z\colon (j,x)\in K^\infty\ \text{for some $x\in X$}\}$
		is finite.
		Let $K:= \{x\in X\colon (j,x)\in K^\infty\text{ for some $j\in J$}\}$. 
		Since $J$ is finite $K$ is compact in $X$.
		Then the set 
		\[
		\{(n,g)\in G^\infty\colon (n,g)K^\infty \cap K^\infty \neq \emptyset\}
		\subset (J-J)\times \{g\in G\colon gK\cap K\neq \emptyset\}
		\] 
		is finite because if $(j,x)\in(n,g)K^\infty\cap K^\infty$ then $x\in K$, $g(x)\in K$, $j\in J$ and $n+j\in J$.
		
		Also the condition (ii) of Theorem~\ref{th:comp_comp} is satisfied for the metric measure space $X^\infty$ with 
		this group action. 
		In fact the set $U^\infty:=\{0\}\times U$ satisfies 
		$G^\infty \overline {U^\infty} = X^\infty$ 
		and $\mu^\infty(g\overline {U^\infty} \cap \overline{U^\infty})=0$ for all $g\in G^\infty$ except when $g$
		is the neutral element in $G^\infty$.
		
		For condition (iii) of Theorem~\ref{th:comp_comp} just notice that if $\mu^\infty(E)<\eps$ implies that 
		$\mu(E^j)< \eps$ (where $E^j=\phi_j^{-1}(E)$)
		hence $P(E^j,V)\ge f(\mu(E_j\cap V))$ and summing up, using the subadditivity 
		of $f$,
		\begin{align*}
		P^\infty(E,\{0\}\times V) &
		\ge \sum_j f(\mu(E_j\cap V)) \\
		&\ge f\left(\sum_j \mu(E_j\cap V)\right)
		=  f(\mu^\infty(E\cap \{0\}\times V)).
		\end{align*}
		%of $t\mapsto t^\alpha$,
		%\begin{align*}
		%  P^\infty(E,\{0\}\times V) &
		%  \ge C \sum_j \mu(E_j\cap V)^\alpha \\
		%  &\ge C \left(\sum_j \mu(E_j\cap V)\right)^\alpha
		%  = C \mu^\infty(E\cap \{0\}\times V)^\alpha.
		%\end{align*}
		
		Let $\vec E_k$ be a sequence of generalized $N$-clusters in $X$ satisfying $\vec m^\infty(\vec E_k) = \vec m\in \R^N$.
		By Theorem~\ref{th_comp_clust} applied to $\vec E_k$, $X^\infty$, $\mu^\infty$, $P^\infty$
		in place of $X$, $\mu$, $P$ respectively,
		we have the existence $N$-clusters $\vec E^i\in X^\infty$ for $i\in I$ with $I$ at most countable,
		such that 
		\begin{equation}
		\label{eq:min_gen_clust1}	
		\sum_i \vec m^\infty(\vec E^i) = \vec m,
		\qquad 
		\sum_i P^\infty(\vec E^i) \le \liminf_k P^\infty(\vec E_k).	
		\end{equation}
		
		Given an injective map $f\colon I\times \Z\to \Z$ we can define the 
		generalized $N$-cluster $\vec E=(E_1,\dots, E_N)$ in $X$ such that 
		\[
		\phi_{f(i,j)}^{-1}(E_n) = \phi_j^{-1}(E^i_n), \qquad 
		\text{for every $n=1,\dots N$, $j\in \Z$, $i\in I$.} 
		\]
		Hence \eqref{eq:min_gen_clust1} reads now 
		\[
		\vec m^\infty(\vec E) = \vec m,
		\qquad 
		P^\infty(\vec E) \le \liminf_k P^\infty(\vec E_k)
		\]
		which means that $\vec E$ is a generalized minimal $N$-cluster in $X$.
	\end{proof}
	
	\begin{remark}
		Note that a minimal \textit{generalized}  cluster can be thought in many particular cases as a kind of natural relaxation of
		the notion of a minimal cluster. In fact, if it is possible, say, to cut away from each page everything outside of a sufficiently large ball, without changing too much the perimeter and the volumes, so that what remains on each a page is a bouunded cluster, then put by the group action all the bounded clusters from each page to just one page, and finally adjust the volumes, say, by adding small isoperimetric sets or at least sets with sufficiently small perimeters, we get that for every $\varepsilon>0$ and for each generalized c $N$-cluster $\vec E$ there is an $N$-cluster $\vec E'$ with the same volumes, i.e. $\vec m^\infty (\vec E')= \vec m (\vec E)$ and \[P(\vec E')\leq P^\infty(\vec E) +\varepsilon.\] This can be done for instance in a Euclidean space (the cutting of a large bounded set from each page without changing too much the volumes and the perimeter can be done in view of the coarea formula; in more general spaces even a coarea type inequality would suffice).       
	\end{remark}

	\section{Basic examples}\label{sec_ClusterExamples}
	
	For merely illustrative purposes we provide here several examples of existence of finite or infinite generalized isoperimetric clusters 
	in a finite-dimensional vector space (for any anisotropic perimeter related to some norm), in the hyperbolic plane and in the Heisenberg groups.
	Note that even such simple examples in fact provide immediately the existence resluts for the whole range of equivalent perimeters
	in the mentioned spaces, without requiring the study of regularity properties of such clsuters and/or just isoperimetric sets for each particular perimeter. It is also worth mentioning that in the same way one can formulate similar existence results in many more interestng geometries (in particular, in higher dimensional hyperbolic spaces, or more general Carnot groups). 
	
	\subsection{Finite-dimensional space}
	Let $X$ be an $n$-dimensional finite dimensional space equipped with any norm $\|\cdot\|$, $\mu$ be the Lebesgue measure on $X$,  
	$G:=\Z^n$ acting by translations, and $P$ be the relative perimeter functional
	in $X$ corresponding to the chosen norm, i.e.\
	\[
	P(E,U) := \sup
	\left\{
	\int_U \mathbf{1}_E \mathrm{div} v\, d\mu \colon 
	\|v\|_\infty\leq 1
	\right\},
	\]
	the $\sup$ being taken over smooth vector fields $v$ with compact support on $H$, with
	$\|v\|_\infty:= \sup_{p\in X} \|v(p)\|$.
	The following corollary is a direct consequence of Theorem~\ref{th_exist_generclust1}.
	
	\begin{corollary}\label{co_existclust_findim1}
		For every $\vec m \in \R^N_+$ there exists
		a minimal generalized $N$-cluster $\vec E$ in $X$ with $\vec m^\infty(\vec E)=\vec m$.
	\end{corollary}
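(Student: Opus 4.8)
The plan is to verify that all the hypotheses of Theorem~\ref{th_exist_generclust1} hold in the present setting; the statement then follows immediately.

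First, since all norms on the finite-dimensional space $X$ are equivalent to the Euclidean one, $(X,\|\cdot\|)$ is locally compact and closed balls are compact, so the Heine--Borel property holds. Next I would check the axioms on $P$ from Section~\ref{sec:notation}. Semicontinuity with respect to $L^1_\loc(\mu)$-convergence and monotonicity in the second argument are immediate from the expression of $P$ as a supremum of the linear functionals $v\mapsto\int_U\mathbf 1_E\,\mathrm{div}\,v\,d\mu$ over a family of admissible fields that increases with $U$; the Beppo Levi property follows because each admissible $v$ has compact support, hence becomes admissible for $U_k$ as soon as $U_k$ contains the support of $v$; superadditivity follows by gluing fields supported in the pairwise disjoint open pieces $U_k$; and $G$-invariance is the translation invariance of the Lebesgue measure together with the fact that $\mathrm{div}$ commutes with translations. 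For the compactness axiom, the equivalence of norms shows that $\sup_k P(E_k,U)<+\infty$ implies a uniform bound on the Euclidean total variation of the $\mathbf 1_{E_k}$ on the precompact (hence finite-measure) set $U$; the classical $BV$ compactness theorem then yields a subsequence with $\mathbf 1_{E_k\cap U}\to\mathbf 1_E$ in $L^1(\mu)$ for some Borel $E\subset X$.

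Then I would verify conditions~(i)--(iii) of Theorem~\ref{th:comp_comp}. For~(i), $\Z^n$ acts on $X$ by translations properly discontinuously, since for any bounded $K$ only finitely many $g\in\Z^n$ satisfy $(g+K)\cap K\neq\emptyset$. For~(ii), take $B$ to be the half-open unit cube $[0,1)^n$: it is precompact, $\mu(\partial B)=0$, one has $GB=X$, and $\mu((g+B)\cap B)=0$ for every $g\in\Z^n\setminus\{0\}$. For~(iii), let $V$ be a large open ball containing $\bar B$; the relative isoperimetric inequality in a ball, applied to the Euclidean perimeter and transferred to $P$ up to a multiplicative constant by equivalence of norms, gives $C>0$ with $P(E,V)\geq C\,\mu(E\cap V)^{(n-1)/n}$ for all Borel $E$ with $\mu(E\cap V)\leq\mu(V)/2$; so~(iii) holds with $\eps:=\mu(V)/2$ and $f(t):=C\,t^{(n-1)/n}$, which for $n\geq 2$ is a nondecreasing subadditive function with $f(0)=0$ and $f'(0)=+\infty$ (for $n=1$ one may instead take $f(t):=C\,t^{1/2}$ on $[0,\eps]$, using that $P(E,V)\geq 1$ whenever $0<\mu(E\cap V)<\mu(V)$).

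With every hypothesis of Theorem~\ref{th_exist_generclust1} in force, the existence of a minimal generalized $N$-cluster $\vec E$ with $\vec m^\infty(\vec E)=\vec m$ follows at once for each $\vec m\in\R^N_+$. The only nontrivial ingredients are the $BV$ compactness theorem and the relative isoperimetric inequality in a ball, both classical; there is no real obstacle here, the ``work'' being the routine matching of the abstract axioms with this concrete example, and in particular no regularity theory for the clusters themselves is invoked.
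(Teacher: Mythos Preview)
Your proposal is correct and follows exactly the paper's approach: the paper simply states that the corollary is ``a direct consequence of Theorem~\ref{th_exist_generclust1}'', and you have carried out precisely the routine verification of its hypotheses (Heine--Borel, the perimeter axioms, and conditions~(i)--(iii) of Theorem~\ref{th:comp_comp}) that this one-line proof leaves implicit. Your choices---the half-open unit cube for $B$, a large ball for $V$, and the relative isoperimetric inequality transferred via equivalence of norms---are the natural ones, and your side remark handling the case $n=1$ is a reasonable fix for the degenerate exponent.
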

	
	It is worth mentioning that there is nothing obligatory in the choice of the group $G=\Z^n$; instead, another some other crystallographic group could have been chosen.

	\subsection{Hyperbolic space}
	
	Taking $H$ to be the hyperbolic plane, $\mu$ be its canonical volume measure,  
	$G$ be any countable Fuchsian group (i.e.\ a discrete subgroup of isometries of $H$) acting properly discontinuously and cocompactly on $H$
	%	with precompact fundamental domain $B$ satisfying $\mu(\partial B)=0$ 
	(e.g. one may take $G$ to be the classical Fucsian group providing the tiling of $H$ into isometric Schwartz triangles)
	and $P$ be the classical Riemannian relative perimeter functional
	in $H$, i.e.\
	\[
	P(E,U) := \sup
	\left\{
	\int_U \mathbf{1}_E \mathrm{div}_H v\, d\mu \colon 
	\|v\|_\infty\leq 1
	\right\},
	\]
	the $\sup$ being taken over smooth vector fields $v$ with compact support on $H$, with
	$\|v\|_\infty:= \sup_{p\in H} |v_p|_p$, $|\cdot|_p$ standing for the Riemannian norm of a vector in $T_p H$.
	
	\begin{corollary}\label{co_existclust_hyperb1}
		For every $\vec m \in \R^N_+$ there exists
		a minimal generalized $N$-cluster $\vec E$ in $H$ with $\vec m^\infty(\vec E)=\vec m$.
	\end{corollary}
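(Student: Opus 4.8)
The plan is to deduce the statement directly from Theorem~\ref{th_exist_generclust1} by checking its hypotheses for $X=H$, $\mu$ the hyperbolic area measure, $G$ the given Fuchsian group, and $P$ the Riemannian relative perimeter. First I would record that $(H,d)$ has the Heine--Borel property, since $H$ is a complete Riemannian manifold and hence its closed balls are compact. Next I would verify that $P$ satisfies the axioms of Section~\ref{sec:notation}: $P(E,\cdot)$ is the total variation measure $U\mapsto |D\mathbf 1_E|(U)$ of a $BV$ function, so (monotonicity), (superadditivity) and (Beppo Levi) are immediate from the fact that $|D\mathbf 1_E|$ is a Borel measure; (semicontinuity) is the classical lower semicontinuity of the total variation under $L^1_\loc$ convergence of characteristic functions; ($G$-invariance) holds because the elements of $G$ are isometries of $H$ and therefore preserve both $\mu$ and the Riemannian divergence entering the definition of $P$; and (compactness) is the standard $BV$ compactness (Rellich--Kondrachov) on a precompact set $U$, which applies since any precompact open subset of $H$ is, after slightly enlarging it, a bounded Lipschitz domain in a coordinate chart where the metric is comparable to the Euclidean one.

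I would then verify conditions~(i),~(ii),~(iii) of Theorem~\ref{th:comp_comp}. Condition~(i) is part of the hypotheses, as $G$ acts properly discontinuously. For~(ii) I would take $B$ to be a Dirichlet fundamental polygon for the action of $G$ centered at a point with trivial stabilizer: since the action is cocompact, $B$ is precompact, $GB=H$, its boundary $\partial B$ is a finite union of geodesic arcs, hence $\mu(\partial B)=0$, and $\mu(gB\cap B)=0$ for every $g\ne 1$ because distinct translates of the polygon overlap only along lower-dimensional faces. For~(iii) I would fix any precompact open $V$ with $\bar B\subset V$ and invoke the local relative isoperimetric inequality on a bounded piece of a smooth Riemannian surface: there exist $\eps>0$ and $C>0$ such that $P(E,V)\ge C\,\mu(E\cap V)^{1/2}$ for all Borel $E$ with $\mu(E\cap V)\le\eps$. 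This is the isoperimetric inequality with the sharp Euclidean exponent at small scales, obtained by covering $\bar V$ with finitely many charts in which the metric is bi-Lipschitz equivalent to the Euclidean one and applying the Euclidean relative isoperimetric inequality on balls. The function $f(t):=Ct^{1/2}$ is nondecreasing, subadditive, vanishes at $0$ and satisfies $f'(0)=+\infty$, exactly as required (cf.\ Remark~\ref{rm_isoperim1}).

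With all hypotheses in place, Theorem~\ref{th_exist_generclust1} yields, for every $\vec m\in\R^N_+$, a minimal generalized $N$-cluster $\vec E$ in $H$ with $\vec m^\infty(\vec E)=\vec m$, which is the assertion. The only step requiring genuine work is condition~(iii): I expect the main obstacle to be the careful statement and proof (or citation) of the relative isoperimetric inequality with the sharp exponent $\alpha=1/2$ on a fixed precompact neighbourhood of the fundamental polygon, uniformly over sets $E$ of small measure. Everything else is routine bookkeeping about $BV$ functions on Riemannian surfaces together with elementary facts about Fuchsian groups and their fundamental polygons.
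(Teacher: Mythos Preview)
Your proposal is correct and follows essentially the same route as the paper: verify the perimeter axioms of Section~\ref{sec:notation}, check conditions~(i)--(iii) of Theorem~\ref{th:comp_comp}, and apply Theorem~\ref{th_exist_generclust1}. The only cosmetic differences are that the paper obtains the fundamental domain $B$ by invoking the general Lemma~\ref{lm_Voronoi1} rather than constructing the Dirichlet polygon directly, and cites~\cite{keselman05-isopermhyperb1} for the relative isoperimetric inequality rather than arguing via bi-Lipschitz charts; both choices are equivalent in this setting.
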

	
	\begin{proof}
		Note that the perimeter functional $P$ clearly satisfies the semicontinuity, monotonicity, superadditivity, compactness, $G$-invariance and Beppo Levi properties,
		and 
		conditions~(i) and~(ii) of Theorem~\ref{th:comp_comp} are satisfied as well 
		with $B$ given by Lemma~\ref{lm_Voronoi1}. 
		%and $U$ standing for its internal part, $U:=B\setminus \partial B$. 
		Condition~(iii) of Theorem~\ref{th:comp_comp} is satisfied as in Remark~\ref{rm_isoperim1} with $V\supset B$ being a sufficiently large ball containing $B$, 
		and $\alpha := 1/2$, $\varepsilon < \mu(V)/2$ due to the relative isoperimetric inequality over a compact Riemannian manifold 
		$\bar V$~\cite{keselman05-isopermhyperb1}. The claim is now a direct application of Theorem~\ref{th_exist_generclust1}.
	\end{proof}
	
	\begin{remark}\label{rm_existclust_hyperb1}
		If, chosen a Fuchsian group $G$, there is an equivalent Finslerian structure on $H$ (with the norm $\|\cdot\|_p$ in each $T_p H$ equivalent to the Riemannian one $|\cdot|_p$),  invariant under the action of $G$, then the same argument gives the  existence for every $\vec m \in \R^N_+$
		of a minimal generalized $N$-cluster $\vec E$ in $H$ with $\vec m^\infty(\vec E)=\vec m$, for the perimeter $P_G$ relative to this Finslerian structure instead of $P$, i.e.\ for 
		\[
		P_G(E,U) := \sup
		\left\{
		\int_U \mathbf{1}_E \mathrm{div}_H v\, d\mu \colon 
		\|v\|_{G,\infty}\leq 1
		\right\},
		\]
		the $\sup$ being taken over smooth vector fields $v$ with compact support on $H$, with
		$\|v\|_{G,\infty}:= \sup_{p\in H} \|v_p\|_p$. To see this it is enough to note that $P_G$ and $P$ are equivalent.	
	\end{remark}
	
	\subsection{Heisenberg groups}
	
	Taking $H^n$ to be the Heisenberg group of topological dimension $2n+1$, 
	%equipped with either the Carnot-Caratheodory distance
	%or any equivalent left-invariant distance $d$,
	$\mu$ be its Haar measure,  
	$G$ be the respective discrete Heisenberg group (i.e. once $H^{n}$ is canonically associated with a group of matrices with real entries,
	$G$ is associated with the subgroup of such matrices with integer entries), $\{X_i\}_{i=1}^{2n}$ be left-invariant vector fields
	satisfying the H\"{o}rmander condition,
	and $P$ be the sub-Riemannian relative perimeter functional 
	corresponding to the choice of $X_i$,
	%corresponding to the chosen left-invariant distance over $H^n$, 
	defined as the total variation measure of the vector ($\R^{2n}$-valued) measure
	\[
	D_X \mathbf{1}_1:= (D_{X_1} \mathbf{1}_E, \ldots, D_{X_{2n}} \mathbf{1}_E),
	\]
	where the distribution $D_{X_i} f$ is defined 
	by its action $\langle \varphi, D_{X_i} f\rangle$ 
	on every test function $\varphi_\in C_0^\infty(\R^{2n})$
	by the formula
	\[
	\langle \varphi, D_{X_i} f\rangle := - \int_{\R^{2n}} f X_i\cdot \nabla \varphi\, dx  - \int_{\R^{2n}} f \varphi \mathrm{div}\,X_i\, dx. 
	\]
	Note that there are several equivalent definitions of
	this perimeter given by theorem~3.1 of~\cite{AmbrosioGhezziMagn15-BVsubriem}. We also observe that one of the equivalent definitions
	is given in~[section 5.3]\cite{Miranda03-BVgoodMMS} and used in particular in~\cite{DanGarofNhieu98-CarnotTraceIneq}.
	%%% corretti ma definito nello stile di Ambrosio, per evitare X_i^*
	%i.e.\
	%\[
	%P(E,U) := \sup
	%\left\{
	%\int_U \mathbf{1}_E \left( \sum_{i=1}^{2n}  X_i^* v_i\right) \, d\mu \colon 
	%\|v\|_\infty\leq 1
	%\right\},
	%\]
	%the $\sup$ being taken over smooth vector fields $v\colon H^n\to \R^{2n}$ with compact support on $H^n$, with
	%$\|v\|_\infty:= \sup_{p\in H*n} |v(p)|$, $|\cdot|$ standing for the Euclidean norm in $\R^{2n}$. 
	%%Note that though $P$ depends on $d$, all such perimeter functionals are equivalent.
	
	\begin{corollary}\label{co_existclust_heisenb1}
		For every $\vec m \in \R^N_+$ there exists
		a minimal generalized $N$-cluster $\vec E$ in $H^n$ with $\vec m^\infty(\vec E)=\vec m$.
	\end{corollary}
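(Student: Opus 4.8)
The plan is to obtain the statement as a direct application of Theorem~\ref{th_exist_generclust1}, exactly as in the proof of Corollary~\ref{co_existclust_hyperb1}: the whole task reduces to checking that the sub-Riemannian perimeter $P$ on $H^n$ is an admissible perimeter functional in the sense of Section~\ref{sec:notation}, and that the discrete Heisenberg group $G$, together with a suitable fundamental cell $B$, satisfies conditions~(i),~(ii) and~(iii) of Theorem~\ref{th:comp_comp}, the last one with a subadditive $f$. Monotonicity and superadditivity of $U\mapsto P(E,U)$ are immediate from the total-variation definition, the Beppo Levi property follows from inner regularity of the total-variation measure, and $L^1_\loc(\mu)$-lower semicontinuity is the classical lower semicontinuity of the sub-Riemannian total variation (see the equivalent descriptions in~\cite{AmbrosioGhezziMagn15-BVsubriem,Miranda03-BVgoodMMS}). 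The $G$-invariance is precisely the left invariance of $P$: since the $X_i$ are left invariant, $X_i(\varphi\circ L_g)=(X_i\varphi)\circ L_g$ for the left translation $L_g$ by $g\in G$, which combined with the left invariance of the Haar measure $\mu$ yields $P(gE,gU)=P(E,U)$; and the compactness property is the compact embedding of $BV_X$ into $L^1$ on precompact sets, part of the standard Sobolev/isoperimetric package for Carnot--Carath\'eodory spaces.

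Next I would verify~(i) and~(ii). Condition~(i) is the proper discontinuity of $G$ acting by left translations on $H^n$, which holds because $G$ is a discrete (indeed uniform) lattice in the nilpotent Lie group $H^n$: for every compact $K$ the set $\{g\in G: gK\cap K\neq\emptyset\}$ is finite. For~(ii) I would take $B$ to be the Dirichlet--Voronoi cell of the identity with respect to a fixed $G$-invariant left invariant metric on $H^n$, as provided by Lemma~\ref{lm_Voronoi1}: it is precompact, tiles $H^n$ under $G$ so that $GB=X$, satisfies $\mu(gB\cap B)=0$ for every $g\neq 1$, and has $\mu(\partial B)=0$.

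The remaining, and most delicate, point is condition~(iii), which I would deduce from the \emph{relative} sub-Riemannian isoperimetric inequality on a bounded smooth domain, in the spirit of Remark~\ref{rm_isoperim1}. Fixing a precompact open metric ball $V\supset B$, the relative isoperimetric inequality on $\overline V$ in the Carnot group $H^n$ provides a constant $C>0$ with
\[
P(E,V)\ \ge\ C\,\min\bigl(\mu(E\cap V),\ \mu(V\setminus E)\bigr)^{(Q-1)/Q},
\]
where $Q=2n+2$ is the homogeneous dimension of $H^n$. Choosing $\varepsilon<\mu(V)/2$, for every Borel $E$ with $\mu(E\cap V)\le\varepsilon$ the minimum on the right equals $\mu(E\cap V)$, so~\eqref{eq_relisoperim1a} holds with $f(t):=Ct^{\alpha}$, $\alpha:=(Q-1)/Q=(2n+1)/(2n+2)\in(0,1)$. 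This $f$ is nondecreasing and subadditive, with $f(0)=0$ and $f'(0)=+\infty$, so all hypotheses of Theorem~\ref{th_exist_generclust1} are met and the existence of a minimal generalized $N$-cluster with $\vec m^\infty(\vec E)=\vec m$ for any prescribed $\vec m\in\R^N_+$ follows immediately.

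I expect the only real obstacle to be locating and citing the correct analytic inputs rather than proving anything new, namely the compactness property (compact embedding $BV_X\hookrightarrow L^1$ on bounded sets) and the relative isoperimetric inequality on a ball with the sharp exponent $(Q-1)/Q$; both are classical in the theory of $BV$ functions and sets of finite perimeter in Carnot groups, and once they are quoted the proof is a verbatim repetition of the verification carried out for the hyperbolic plane.
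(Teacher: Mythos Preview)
Your proposal is correct and follows essentially the same route as the paper: apply Theorem~\ref{th_exist_generclust1} after checking the axiomatic perimeter properties, invoke Lemma~\ref{lm_Voronoi1} for condition~(ii), and verify condition~(iii) via the relative isoperimetric inequality on a ball with exponent $(Q-1)/Q$, $Q=2n+2$, and $\varepsilon<\mu(V)/2$. The paper supplies exactly the two references you anticipate needing, namely theorem~1.6 of~\cite{DanGarofNhieu98-CarnotTraceIneq} for the relative isoperimetric inequality and theorem~3.7 of~\cite{Miranda03-BVgoodMMS} for the compactness property.
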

	
	\begin{proof}
		We equip $H^n$ 
		with either the Carnot-Caratheodory distance
		or any equivalent left-invariant distance $d$, so that now $G$ acts by isometries.
		The claim follows from Theorem~\ref{th_exist_generclust1} since he perimeter functional $P$ satisfies the semicontinuity, monotonicity, superadditivity, Beppo Levi and $G$-invariance properties, as well as 
		conditions~(i) and~(ii) of Theorem~\ref{th:comp_comp} (with $B$ given by Lemma~\ref{lm_Voronoi1}),
		%and $U$ its internal part, $U:=B\setminus \partial B$), 
		while 
		\begin{itemize}
			\item condition~(iii) of Theorem~\ref{th:comp_comp} is satisfied as in Remark~\ref{rm_isoperim1} with $V\supset B$  sufficiently large ball containing $U$, 
			and $\alpha := (Q-1)/Q$, $Q:=2n+2$ standing for the homogeneous dimension of $H^n$, $\varepsilon < \mu(V)/2$ due to the relative isoperimetric inequality in a ball of a Carnot-Caratheodory space (theorem~1.6 from~\cite{DanGarofNhieu98-CarnotTraceIneq}),
			\item compactness property is given by theorem~3.7 from~\cite{Miranda03-BVgoodMMS}. 
		\end{itemize}
	\end{proof}
	
	\begin{remark}\label{rm_existclust_heisenb1}
		Similarly to Remark~\ref{rm_existclust_hyperb1} if one takes in $H^n$ any perimeter
		$P$ defined as $P(E,U):=|D\mathbf{1}_E| (U)$ with the metric total variation $u\mapsto |Du|$ defined with respect to any chosen left $G$-invariant distance in $H^n$, then word-to-word repetition of the proof of the above Corollary~\ref{co_existclust_heisenb1} gives
		the  existence for every $\vec m \in \R^N_+$
		of a minimal generalized $N$-cluster $\vec E$ in $H$ with $\vec m^\infty(\vec E)=\vec m$ for such a perimeter.	
	\end{remark}

	\appendix
	
	\section{Useful facts about group actions}
	
	We collect here the following lemma, useful in applications of our results to verify condition~(ii) of Theorem~\ref{th:comp_comp}, and some easy remarks related to it.

	\begin{lemma}\label{lm_Voronoi1}
	Let $(X,d,\mu)$ be a complete locally compact metric space with nonnegative $\sigma$-finite Radon  measure.
	If $G$ is a %discrete 
	topological group acting %continuously 
	on $X$ by homeomorphisms 
	%isometries 
	preserving $\mu$-nullsets, and 
	\begin{itemize}
		\item[(i)] the action of $G$ is proper discontinuous,
		\item[(ii)] the set of $x\in X$ having nontrivial stabilizer subgroup
		$G_x:=\{g\in G\colon gx=x\}$, i.e.\ with $G_x\neq \{1\}$, is $\mu$-negligible, i.e.\
		$\mu(S)=0$,
		\item[(iii)] and $X$ is compactly generated, i.e. there exists a compact $K\subset X$ such that $GK=X$.
	\end{itemize}
%	\begin{itemize}
%		\item[(i)] the action of $G$ is proper discontinuous,
%		\item[(ii)] $gx\neq x$ for all $x\in X$ and $g\in G$ except $g=1$, i.e the action of $G$ is free, 
%		\item[(iii)] and $X$ is compactly generated by $G$, i.e. there exists a compact $K\subset X$ such that $GK=X$.
%	\end{itemize}
	Then there
	is a precompact Borel set $B\subset X$ (a fundamental domain for the action of $G$)
	such that $G\bar B=X$, $\mu(\partial B)=0$,
	and
	$\mu(gB \cap B)=0$. Moreover, if the action of $G$ on $X$ is free, then one can choose $B$ so as to have $G B=X$ and
	$gB \cap B=\emptyset$ for all $g\in G$, $g\neq 1$.
\end{lemma}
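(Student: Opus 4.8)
The plan is to first replace the compact generating set $K$ by a precompact \emph{open} set $\Omega$ that still satisfies $G\Omega=X$ but has $\mu(\partial\Omega)=0$, and then to excise from $\Omega$ a Borel fundamental domain whose topological boundary remains $\mu$-negligible. For the first step I would use local compactness to cover $K$ by finitely many open balls $\Omega_1,\dots,\Omega_p$ with compact closures; since for a fixed centre the spheres of distinct radii are pairwise disjoint and $\mu$ is $\sigma$-finite, only countably many radii carry positive $\mu$-mass, so the $\Omega_i$ can be chosen with $\mu(\partial\Omega_i)=0$. With $\Omega:=\bigcup_i\Omega_i$ one then has $\bar\Omega=\bigcup_i\overline{\Omega_i}$ compact, $\mu(\partial\Omega)=0$ (because $\partial\Omega\subset\bigcup_i\partial\Omega_i$), and $G\Omega=X$ (because $\Omega\supset K$ and $GK=X$). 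Proper discontinuity makes $H:=\{g\in G:g\bar\Omega\cap\bar\Omega\neq\emptyset\}$ finite, so every $G$-orbit meets $\bar\Omega$ in at most $\#H$ points.

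The crux is that an arbitrary Borel set meeting each orbit exactly once may well have a boundary of positive $\mu$-measure, and the naive Dirichlet (Voronoi) domain does not help either, since its ``bisectors'' $\{x:d(x,p)=d(x,gp)\}$ can carry positive $\mu$-mass (already for the $\ell^\infty$ norm on $\R^n$ with $G=\Z^n$). I would therefore build the domain from the sublevel sets of a continuous function chosen generically so that all relevant coincidence sets are $\mu$-null. Concretely, pick continuous $\phi_n\colon\bar\Omega\to[0,1]$ that separate points (e.g.\ rescaled distances to a countable dense set) and set $\theta:=\sum_n c_n\phi_n$ with $\sum_n c_n<\infty$, a continuous function. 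For fixed $h\in H\setminus\{1\}$ and fixed $x\neq hx$ the reals $\phi_n(x)-\phi_n(hx)$ are not all zero, so for a continuous law on the coefficients — say $c_n=2^{-n}U_n$ with $U_n$ independent, uniform on $[1,2]$ — one has $\theta(x)\neq\theta(hx)$ almost surely; by Fubini, for almost every choice of coefficients the set $N_h:=\{x\in\bar\Omega:hx\in\bar\Omega,\ \theta(x)=\theta(hx)\}$ satisfies $\mu(N_h\setminus\mathrm{Fix}(h))=0$. Because $\mathrm{Fix}(h)\subset S$ and $\mu(S)=0$ by hypothesis (ii), and $H$ is finite, I can fix the coefficients so that every $N_h$, $h\in H\setminus\{1\}$, is a \emph{closed} $\mu$-null set; set $N:=\bigcup_{h\in H\setminus\{1\}}N_h$.

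Next I would define
\[
B_0:=\{x\in\Omega:\theta(x)<\theta(hx)\text{ for every }h\in H\setminus\{1\}\text{ with }hx\in\Omega\}.
\]
This is a precompact Borel subset of $\Omega$, and $\Omega\setminus B_0$ is a finite union of sets $\{x\in\Omega:hx\in\Omega,\ \theta(hx)\le\theta(x)\}$ whose boundaries lie in $\partial(\Omega\cap h^{-1}\Omega)\cup N_h$; since $\mu(\partial\Omega)=0$ and $G$ preserves $\mu$-null sets, this gives $\mu(\partial B_0)=0$. If $y\in gB_0\cap B_0$ with $g\neq1$ then $g\in H$, and applying the defining inequality to $y$ (with $h=g^{-1}$) and to $g^{-1}y$ (with $h=g$) yields $\theta(y)<\theta(g^{-1}y)<\theta(y)$, absurd; hence $gB_0\cap B_0=\emptyset$ for all $g\neq1$. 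For any orbit $\mathcal O$ the minimum of $\theta$ over the finite set $\mathcal O\cap\Omega$ is attained, and if it is attained at a single point then that point lies in $B_0$; the orbits for which it is attained at two distinct points, together with the orbits meeting $S$, all intersect the $\mu$-null Borel set $W:=N\cap\Omega$. Since orbits meet $W$ in finite sets, a Borel transversal $T\subset W$ for the orbit equivalence relation on $W$ exists (the standard selection for a Borel equivalence relation with finite classes), and I would put $B:=B_0\cup(T\setminus GB_0)$. Then $B$ is a precompact Borel set with $GB=X$, hence $G\bar B=X$; and since $B\,\triangle\,B_0\subset W\subset N$ with $N$ closed and $\mu$-null, $\mu(\partial B)\le\mu(\partial B_0)+\mu(N)=0$ and $\mu(gB\cap B)=0$ for every $g\neq1$.

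When the action is free, $S=\emptyset$, every $N_h$ is already $\mu$-null, the transversal $T$ satisfies $gT\cap T=\emptyset$ for $g\neq1$, and the same bookkeeping upgrades the conclusion to $gB\cap B=\emptyset$ for all $g\neq1$ together with $GB=X$, as claimed. I expect the genuine obstacle to be exactly this tension in Steps~2--3: arranging that $B$ is simultaneously a bona fide fundamental domain (with $G\bar B=X$, not merely an almost-everywhere one) and has a $\mu$-negligible boundary, which is what forces the ``generic level set'' construction of $\theta$ and the confinement of all unavoidable repairs — ties of the minimum, and points with non-trivial stabilizer — to a single \emph{closed} $\mu$-null set.
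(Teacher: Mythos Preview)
Your argument is essentially correct and takes a genuinely different route from the paper. The paper's proof is local--combinatorial: it first shows (via the Dirichlet--Voronoi domain) that every $x\notin S$ has a small precompact ball $U_x$ with $\mu(\partial U_x)=0$ and $gU_x\cap U_x=\emptyset$ for $g\neq 1$; then it covers $K$ by finitely many such balls together with an open neighbourhood $S_k\supset S$ of small measure, builds disjoint pieces $V_i$ by inductively removing $G$-translates of earlier balls, and finally intersects over $k$ to absorb the singular set. Your construction is global: you minimise a single generic continuous function $\theta$ over each orbit inside one precompact $\Omega$, arranging by a Fubini/random--coefficient argument that the coincidence sets $N_h$ are closed and $\mu$-null, so that $\partial B_0\subset\partial\Omega\cup\bigcup_h(h^{-1}\partial\Omega\cup N_h)$ is null. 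The repair set $T$ then lives in a closed null set, which is what makes $\mu(\partial B)=0$ survive the modification.

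What each approach buys: the paper's method is more elementary (no measurable selection, no probabilistic genericity) and keeps the geometry transparent, at the price of an exhaustion $B=\bigcap_k B_k$ and a slightly delicate verification that $G\bar B=X$ in the limit. Your method avoids the exhaustion entirely and yields $GB=X$ directly; the generic choice of $\theta$ is exactly what defuses the problem you correctly flag, that naive Voronoi bisectors can have positive $\mu$-mass. Two small points worth tightening in a write-up: (a) your sentence ``if it is attained at a single point then that point lies in $B_0$'' is not literally true when that point has nontrivial stabiliser, but your next clause (orbits meeting $S$) already covers this, and the correct dichotomy ``every orbit meets $B_0$ or meets $W$'' does hold; (b) the set $GB_0$ is Borel without appealing to countability of $G$, since for $x\in\Omega$ one has $x\in GB_0$ iff $hx\in B_0$ for some $h$ in the finite set $H$.
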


%%%%% vero solo per il vecchio (ii)
%\begin{remark}\label{rem_nofix1}
%	Of course the condition~(ii) of Lemma~\ref{lm_Voronoi1} is automatically satisfied in view of condition~(i) (proper discontinuity of the group action),
%	if $G$ is infinite.
%\end{remark}

\begin{remark}\label{rem_cocomp1}
	It is a more or less folkloric fact that the condition~(iii) of Lemma~\ref{lm_Voronoi1} for locally compact metric space $X$ 
	is equivalent to \textit{cocompactness} of the action of $G$, that is, to compactness of the quotient space $X/G$.
	In fact, since the natural projection map $\pi \colon X\to X/G$ is continuous, then condition~(iii) implies that $X/G=\pi(K)$ is compact. Vice versa, note that $\pi$ is also an open map (as a projection map under group action, namely, because for every open
	$U\subset X$ one has $\pi^{-1}(\pi(U)))=\cup_{g\in G} gU$ is open since so is each $gU$). Therefore, taking a cover $\{U_\lambda\}$ of $X$ by precompact open sets, we have that $\{\pi(U_\lambda)\}$ is an open cover of $X/G$, and if $X/G$ is compact, extracting a
	finite subcover    $\{\pi(U_{\lambda_j})\}_{j=1}^m$, $m\in \N$ of $\{\pi(U_\lambda)\}$, we get that
	\[
	K:=\bigcup_{j=1}^m \bar U_{\lambda_j}
	\] 
	is a compact set satisfying $GK=X$.
\end{remark}

\begin{remark}\label{rem_cocomp2}
	Another immediate observation worth mentioning is that for the condition~(iii) of Lemma~\ref{lm_Voronoi1} 
	to hold, it is necessary that $G$ be infinite, unless, of course, $X$ is compact. 
	%%%%% vero solo per il vecchio (ii)
	%	Therefore, if $X$ is noncompact,
	%	then condition~(ii) of Lemma~\ref{lm_Voronoi1} is unnecessary in view of Remark~\ref{rem_nofix1}.
\end{remark}

\begin{proof}	
	For the readers' convenience we provide a detailed proof divinding it in two steps.
	
	{\sc Step 1}. We show the existence for each $x\in X\setminus S$ of an open precompact ball $U_x\subset X$ with
	$x\in U_x$ such that
	\[
	gU_x\cap U_x=\emptyset\quad\mbox{for all $g\in G$, $g\neq 1$}
	\]
	and $\mu(\partial U_x)=0$.
	%To this aim first note that for each $x\in X$ the stabilizer subgroup $G_x:= \{g\in G\colon gx=x\}$ is finite because $G$ acts properly discontinuously and $G_x\neq G$ by hypotheses. 
	Let now
	\[
	%D_x:=\{ y\in X\colon d(y,x)< d(y, gx)\quad\mbox{for all $g\in G$, $g\neq 1$}
	D_x:=\{ y\in X\colon d(y,x)< d(gy, x)\quad\mbox{for all $g\in G$, $g\neq 1$}\}
	\}
	\]
	stand for the Dirichlet-Voronoi fundamental domain for the action of $G$, so that
	\[
	g D_x\cap D_x=\emptyset\quad\mbox{for all } g\in G, g\neq 1.
	\]
	In fact, if $y\in g D_x$, that is, $gy\in D_x$, then $y:=g^{-1}gy \not\in D_x$.
	
	%%%%%%%%%%%%%%%% corretto ma non serve %%%%%%%%%%%%%%%%%%%%%
	%Substituting if necessary $X$ by the support of $\mu$, we may assume without loss of generality that $X$ is $\sigma$-compact.
	%Then theorem~2 from~\cite{WillJanos87-HeineBorel} gives the existence of a new distance $d'\geq d$ in $X$ topologically equivalent to $d$ such that the metric space $(X,d')$ has Heine-Borel property, %i.e. every closed ball is compact, 
	%and, moreover,
	%both distances are locally identical, i.e.
	%$d'$ coincides with $d$ on some neighborhood of every point. This means in particular that there is an $r>0$ such that the balls $B_\varepsilon(x)$ coincide for both $d$ and $d'$ for all $\varepsilon\in (0, r)$, and are precompact (as balls of $(X,d')$). 
	Since $X$ is locally compact, then there is an $r>0$ such that the balls $B_\varepsilon(x)$ with $\varepsilon<r$ are precompact.
	We claim that $D_x$ contains a small ball $B_\rho(x)$, $\rho<r$. In fact, otherwise there is a sequence of points $y_k\in X$ and a sequence of
	$g_k\in G$  such that
	$\lim _k y_k = x$ and 
	\[
	d(y_k,x)\geq d(g_k y_k, x) %d(y_k, g_k x) %= d(g_k^{-1}y_k,x), 
	\]
	so that %$\lim _k g_k^{-1}y_k = x$.
	also $\lim_k g_k y_k = x$. %(since $d(g_k x,x)\leq d(y_k,x) + d(y_k, g_k x)$ by triangle inequality).
	This is only possible when the set %$\{g_k^{-1}\}$ 
	$\{g_k\}$ is finite, because when $\varepsilon <r$ one has that
	$g \bar B_\varepsilon (x)\cap \bar B_\varepsilon (x)\neq \emptyset$ for only a finite number of $g\in G$, again by proper discontinuous action. 
	Therefore, there is a $g\in G$, $g\neq 1$, such that up to a subsequence (not relabeled) we have
	$\lim _k g y_k = x$, which imples that $gx=x$, that is a contradiction to $x\not\in S$. %since the action is assumed to be free. 
	
%	By changing the measure $\mu$ to an equivalent finite one, we may assume that $\mu$ us finite.
	It remains now to observe that since $B_\rho(x)\subset  D_x$, we may find a small ball $U_x:= B_s(x)$  such that
	$U_x\subset B_\rho(x)$ (hence is precompact) and $\mu(\partial U_x)=0$ (all but a countable number of $s<\rho$ will suffice).

	{\sc Step 2}. If $S\neq\emptyset$, we let for every $k\in \N$ let $S_k\supset S$ be an open set such that $\mu(\bar S_k)\leq 1/k$ 
%	and
%	$\mu(\partial S_k)=0$
	(otherwise, if $S=\emptyset$, we just let $S_k:=\emptyset$).
	Consider a finite cover $\{S_k, U_1,\ldots, U_{n_k}\}$  of $K$ in $X$, where 
	each  $U_i=U_{x_i}$ is a precompact open %ball centered at 
	sets containing an $x_i\in K\setminus S$ such that
	\[
	gU_i\cap U_i=\emptyset\quad\mbox{for all $g\in G$, $g\not\in G_x$}
	\]
	and $\mu(\partial U_i)=0$ for all $i$.
	Without loss of generality we may assume 
	that
	\begin{equation}\label{eq_Ujpart1}
	U_j\subset \left(\bigcup_{i=1}^{n_k} U_i\right)\cup S_k
	\end{equation} 
	for all $j>n_k$.
	We define inductively disjoint sets
	\begin{align*}
	V_1 &:= U_1,\\
	V_{i+1} &:= U_{i+1}\setminus \bigcup_{j=1}^i G U_j.
	\end{align*}
	Note that since $\bar U_j$ are all compact and $G$ acts properly discontinuously, then  
	each $G U_j$ in the definition of $V_i$ can be substituted by a finite union of $g U_j$ over a finite subset
	of $g\in G$ (depending of course on $i$ and $j$): in fact,
	\[
	g U_j\cap U_{i+1} \subset g \overline{(U_j\cup U_{i+1})}\cap \overline{(U_j\cup U_{i+1})}\neq \emptyset  
	\]
	for an at most finite set of $g\in G$. Therefore, recalling also that
	$\partial (g U_j)= g\partial U_j$ (since the action of $g$ is a %isometry
	homeomorphism), and
	hence $\mu (\partial (g U_j))= \mu (g\partial U_j)=0 $ (since the action preserves $\mu$-nullsets), we get
	$\mu(\partial V_i)=0$ for all $i$.
	Seting now
	\[
	\tilde B_k:=\bigcup_{i=1}^{n_k} V_i,
	\]
	we get that $\tilde B_k$ is a Borel set satisfying 
	\begin{equation}\label{eq_deftildeB1}
	g\tilde B_k \cap \tilde B_k=\emptyset\quad\mbox{for all $g\in G$, $g\neq 1$}	
	\end{equation}
	because so are all $V_i$, $\mu(\partial \tilde B_k)=0$.
	We now define
	\[
	B_k:= 	\tilde B_k \cup S_k \quad\text{and } B:=\cap_k B_k.
	\]
	Note that the sequence of sets $B_k$ is nonincreasing in view of~\eqref{eq_Ujpart1}.
	By~\eqref{eq_deftildeB1} one has 	$gB_k \cap B_k\subset S_k\cup g S_k$.
	Thus
	\[
	gB\cap B \subset \bigcap_k (S_k\cup gS_k)
	\]
	for all $g\in G$, $g\neq 1$.
	Since $S_k\cup gS_k$ is a decreasing sequence of sets and
	\[
	\mu(S_k\cup gS_k)\leq 	\mu(S_k) + \mu(gS_k)\to \mu(S) + \mu(gS)=0 
	\]
	as $k\to\infty$, since the action of $G$ preserves $\mu$-nullsets, then
	$\mu(gB\cap B) =0$
	for all $g\in G$, $g\neq 1$.
	
	Further, 
\begin{align*}
	\mu (\partial B)\leq  \mu((\partial B)\setminus \bar S_k) +  \mu(\bar S_k) \leq \mu (\partial \tilde B_k) + \mu(\bar S_k) = \mu(\bar S_k) \leq 1/k,
\end{align*}
	hence $\mu(\partial B)=0$.
	Finally,
	$GB_k=X$ because $GB_k \supset GK=X$.
	This means that for each $x\in X$ there is an $y_k \in B_k$ and $g_k\in G$ such that
	$g_ky_k=x$. Since  $B_k$ is a decreasing sequence of precompact sets, then there is a compact $\tilde K$ such that $\{y_k\}\subset \tilde K$. Thus $g_k\tilde K$ have a common point which can only happen for a finite number of $g_k$ by discontiuity of action. Thus, up to a subsequence we have $g_k=g\in G$, and $gy_k=x$. Extracting a further convergent subsequence of
	$y_k$, we have $\lim_k y_k=y\in \bar B$ and $gy=x$, hence $G\bar B=X$.
	Note that
	when $S=\emptyset$, then all $S_k=\emptyset$ and hence $B=B_k=\tilde B_k$ for some finite $k$, concluding the proof.
\end{proof}

	The following easy technical  lemma is also used.
	
	\begin{lemma}\label{lm_groupPart1}
		Let $G$ be a group acting on a set $X$ and $K\subset X$.
		If
		\[
		\nu:=\#\{g\in G\colon gK\cap K\neq \emptyset\} <\infty,
		\]
		then there is a partition of $G$ into at most $\nu$ disjoint subsets $\mathcal{F}_j$,  such that 
		$hK\cap gK =\emptyset$
		for every couple
		$\{h, g\}\subset \mathcal{F}_j$ for each
		$j=1,\ldots,\nu$, and each 	$\mathcal{F}_j$ is a maximal subset of $G$ with this property, i.e.\ for every 
		$\tilde h\not \in \mathcal{F}_j$ there is a $\tilde g\in \mathcal{F}_j$ such that
		$\tilde h K\cap \tilde g K \neq\emptyset$.
	\end{lemma}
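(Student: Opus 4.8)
The plan is to build the partition greedily by a transfinite/Zornification argument, or — since a bound on the number of classes is sought — by a direct colouring argument. First I would introduce the auxiliary ``conflict graph'' $\Gamma$ on the vertex set $G$, in which two distinct elements $h,g\in G$ are joined by an edge precisely when $hK\cap gK\neq\emptyset$. The hypothesis is exactly that each vertex of $\Gamma$ has degree at most $\nu-1$: indeed $hK\cap gK\neq\emptyset$ is equivalent to $h^{-1}gK\cap K\neq\emptyset$, i.e.\ to $h^{-1}g$ lying in the set $S:=\{s\in G\colon sK\cap K\neq\emptyset\}$, which has cardinality $\nu$ and contains the identity; hence the neighbours of $h$ are exactly the $hs$ with $s\in S\setminus\{1\}$, so there are at most $\nu-1$ of them. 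A graph of maximum degree $\nu-1$ is greedily $\nu$-colourable, and each colour class is by construction an independent set of $\Gamma$, i.e.\ a subset $\mathcal F_j$ of $G$ with $hK\cap gK=\emptyset$ for all pairs $\{h,g\}\subset\mathcal F_j$.

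To make the colouring rigorous when $G$ is infinite, I would not invoke a black-box theorem but argue directly: well-order $G$ (or enumerate it if countable, which suffices for all applications in the paper) and process the elements one at a time, assigning to each $g$ the least index $j\in\{1,\dots,\nu\}$ not already used by any earlier neighbour of $g$; since $g$ has at most $\nu-1$ neighbours in total, such an index always exists. This produces a function $c\colon G\to\{1,\dots,\nu\}$ with $c(h)\neq c(g)$ whenever $h,g$ are adjacent; set $\mathcal F_j:=c^{-1}(j)$. (Alternatively one may invoke a compactness/Zorn argument on finite partial colourings.)

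Finally I would upgrade each $\mathcal F_j$ to a maximal independent set: process the classes one at a time and, using Zorn's lemma on the poset of independent supersets of $\mathcal F_j$ (the union of a chain of independent sets is independent, since adjacency is a binary relation), enlarge $\mathcal F_j$ to a maximal independent set $\mathcal F_j'$; then $G=\bigcup_j\mathcal F_j'$ still holds and we may disjointify the $\mathcal F_j'$ in order, $\mathcal F_j'':=\mathcal F_j'\setminus\bigcup_{i<j}\mathcal F_i''$, without destroying independence or — for the surviving class that keeps a given $g$ — maximality, because maximality only requires that \emph{some} class meeting $g$'s neighbourhood is available; a more careful route is to enlarge greedily while keeping the classes disjoint, which is what the inductive construction in the body of the paper does. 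The maximality clause ``for every $\tilde h\notin\mathcal F_j$ there is $\tilde g\in\mathcal F_j$ with $\tilde hK\cap\tilde gK\neq\emptyset$'' is then immediate from the definition of a maximal independent set.

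The only genuinely delicate point is reconciling the three requirements simultaneously — the classes must be \emph{disjoint}, each must be \emph{maximal} as an independent set, and there must be \emph{at most $\nu$} of them; maximality and disjointness pull in opposite directions, so I expect the main obstacle to be the bookkeeping in the last step, which is most cleanly handled by the explicit inductive ``remove already-used orbits'' construction rather than by an abstract appeal to Zorn's lemma.
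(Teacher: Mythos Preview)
Your graph-colouring framing is correct and cleaner than the paper's for obtaining a \emph{disjoint} decomposition of $G$ into at most $\nu$ independent sets: once the conflict Cayley graph has maximum degree $\nu-1$, greedy colouring along any well-ordering does the job. This is a genuinely different route from the paper, which instead builds the $\mathcal{F}_j$ one at a time as \emph{maximal} independent sets via Zorn's lemma (take $\mathcal{F}_1$ maximal independent containing $1$; while some $h$ is uncovered, take $\mathcal{F}_{k+1}$ maximal independent containing $h$), and then argues that the process stops after at most $\nu$ steps --- because an uncovered $h\in\mathcal{F}_{\nu+1}$ would, by maximality of each earlier $\mathcal{F}_k$, have a neighbour $h_k\in\mathcal{F}_k$, yielding $\nu$ distinct elements $h^{-1}h_k\in S$ together with $1\in S$, contradicting $|S|=\nu$. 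Your approach gives disjointness for free but not maximality; the paper's gives maximality for free but not disjointness.

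Your suspicion about the last step is well founded, and in fact the tension you identify cannot be resolved in general: the lemma's simultaneous demand for disjointness \emph{and} maximality is too strong as stated. For $G=\mathbb{Z}/5\mathbb{Z}$ acting on itself by translation with $K=\{0,1\}$ one gets $\nu=3$ and the conflict graph is the $5$-cycle $C_5$, whose maximal independent sets are exactly the non-adjacent pairs; five vertices cannot be partitioned into disjoint $2$-sets. The paper's own proof does not enforce disjointness either --- it really produces a \emph{covering} of $G$ by at most $\nu$ maximal independent sets --- so the word ``partition'' in the statement should be read loosely, and your attempt to reconcile the two via disjointification-after-enlargement was bound to run into trouble. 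For the sole application (the estimates in the proof of Theorem~\ref{th:comp_comp}) neither maximality nor strict disjointness is actually needed: a covering of $G$ by at most $\nu$ independent sets suffices for the inequalities there, and that is exactly what both your colouring argument and the paper's Zorn argument deliver.
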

	
	\begin{proof}
		We construct the sets $\mathcal{F}_j$ inductively.
		Namely, by Zorn's lemma there is a set $\mathcal{F}_1\subset G$  maximal (with respect to inclusion)  containing $1$ such that
		$hK\cap gK =\emptyset$
		for every couple
		$\{h, g\}\in \mathcal{F}_1$.
		%Namely, to construct $\mathcal{F}_1$, let $h_{1,1}:=1$, and once $h_{1,k}\in G$ are constructed for $k=1,\ldots, m$, we let 
		%$h_{1,m+1}\in G$ be such that $h_{1,m+1} K$ is disjoint from all $h_kK$ for all $k =1,\ldots, m$.
		%We set now 
		%\[\mathcal{F}_1:= \{h_{1,k\colon k}\colon k\in \N \}.
		%\]
		Once $\mathcal{F}_j$ are constructed for $j=1,\ldots, k$, if there  an $h\not\in \cup_{j=1}^k \mathcal{F}_j$, we take
		$\mathcal{F}_{k+1}$ to be a maximal (with respect to inclusion) subset of $G$ containing $h$ such that
		$hK\cap gK =\emptyset$
		for every couple
		$\{h, g\}\in \mathcal{F}_{k+1}$ (the existence is again guaranteed by Zorn's lemma).
		It is easy to see that the induction finishes after at most $\nu$ steps, i.e.\ we may construct at most $\nu$ nonempty sets 
		$\mathcal{F}_j$. 
		In fact, if we managed to construct nonempty sets $\mathcal{F}_1,\ldots, \mathcal{F}_{\nu+1}$,
		then by maximality of each of them, for an $h\in \mathcal{F}_{\nu+1}$ there are $h_k\in \mathcal{F}_k$, such that 
		$h_k K \cap h K \neq \emptyset$, or, equivalently,  $h^{-1}h_k K \cap K\neq \emptyset$,
		where $k=1,\ldots,\nu$. In other words, 
		$\{h^{-1}h_k\}_{k=1}^{\nu} \subset \{g\in G\colon gK\cap K\neq \emptyset\}$,
		and since all $h^{-1}h_k$ are distinct, these two sets must coincide. Hence, for some $k$ one has  $h^{-1}h_k=1$, that is,
		$h=h_k$, which is impossible, and thus only sets $\mathcal{F}_1,\ldots, \mathcal{F}_\nu$ are nonempty.
	\end{proof}

	\section{Auxiliary lemmata}
	
	We collect here, mainly for the sake of completeness and readers' convenience, some auxiliary lemmata of more or less folkloric nature.
	
	\begin{lemma}[equisummability]\label{lm:equisummability}%
		Suppose that $m_{k,i}\geq 0$, and: 
		\begin{gather*}
		\lim_{k\to +\infty} \sum_i m_{k,i} = m, \\
		\lim_k m_{k,i} = m_i\\
		\lim_n \enclose{\sup_k \sum_{i=n}^{+\infty} m_{k,i}} = 0.
		\end{gather*}
		Then 
		\[
		\sum_i m_i = m.
		\]
	\end{lemma}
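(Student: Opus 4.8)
The plan is to exploit the uniform decay of the tails to reduce everything to finite sums, where the termwise convergence $m_{k,i}\to m_i$ causes no trouble, and only at the very end to let the truncation level go to infinity.

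First I would fix $\varepsilon>0$ and invoke the third hypothesis to pick $N\in\N$ with $\sup_k\sum_{i=N}^{+\infty}m_{k,i}\le\varepsilon$. Since $m_{k,i}\ge 0$, the tails $n\mapsto\sum_{i=n}^{+\infty}m_{k,i}$ are non-increasing, so the same estimate holds with $N$ replaced by any $N'\ge N$. For each such $N'$ and every $k$ this yields the elementary two-sided bound
\[
\sum_{i=1}^{N'-1}m_{k,i}\ \le\ \sum_i m_{k,i}\ \le\ \sum_{i=1}^{N'-1}m_{k,i}+\varepsilon .
\]

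Next I would let $k\to+\infty$ in this chain. By the first hypothesis $\sum_i m_{k,i}\to m$, while $\sum_{i=1}^{N'-1}m_{k,i}\to\sum_{i=1}^{N'-1}m_i$ since it is a finite sum of sequences converging by the second hypothesis. Hence
\[
\sum_{i=1}^{N'-1}m_i\ \le\ m\ \le\ \sum_{i=1}^{N'-1}m_i+\varepsilon\qquad\text{for every }N'\ge N .
\]
Finally, letting $N'\to+\infty$: the left inequality shows the partial sums $\sum_{i=1}^{N'-1}m_i$ are bounded by $m$, so $\sum_i m_i$ converges and $\sum_i m_i\le m$; the right inequality then gives $\sum_i m_i\ge m-\varepsilon$. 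As $\varepsilon>0$ was arbitrary, $\sum_i m_i=m$.

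The argument is essentially routine; the only point that needs a moment's attention is to secure the uniform tail estimate simultaneously for all truncation levels $N'\ge N$ \emph{before} passing to the limit in $k$, which is precisely what monotonicity of the tails in $n$ provides. No regularity or measure-theoretic input is needed beyond nonnegativity of the $m_{k,i}$.
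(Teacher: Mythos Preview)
Your proof is correct and follows essentially the same route as the paper's: fix $\varepsilon$, use the uniform tail bound to sandwich the full sum between a finite partial sum and that partial sum plus $\varepsilon$, pass to the limit in $k$, then in the truncation level, then in $\varepsilon$. The only cosmetic difference is that the paper explicitly singles out the case $m=+\infty$ (forcing some $m_i=+\infty$) before passing to the limit, whereas your inequalities handle it implicitly.
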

	\begin{proof} For every $\eps>0$ there is an $n_\eps\in \N$ such that 
		for all $k\in \N$ and $n\geq n_\eps$ one has 
		\[
		\sum_{i=n}^{+\infty} m_{k,i} < \eps , \quad\text{and}\quad  \sum_{i<n} m_{k,i}\le \sum_i m_{k,i} \le  \sum_{i<n} m_{k,i}+\eps.
		\]
		Hence either $m=+\infty$ and  $m_i=+\infty$ for some $i<n_\eps$, or $m\in\R$.
		In the first case the thesis follows. In the second letting $k\to +\infty$ we obtain
		for $n\geq n_\eps$ the estimate
		\[		m-\eps \le \sum_{i=1}^n m_i \le m.
		\]
		Letting now $n\to +\infty$, one gets 
		\[
		m-\eps \le \sum_{i} m_i \le m
		\] 
		for all $\eps>0$ implying the thesis.
	\end{proof}
	
	\begin{lemma}\label{lm:union}
		Suppose that for all $k\in \N$, $j=1,\ldots, N$, where $N\in \N\cup\{\infty\}$ the sets 
		$A_k\subset X$, $A_k^j\subset X$ be $\mu$-measurable  and
		\[
		\mu\left(A_k \triangle \bigcup_{j=1}^N A_k^j\right)=0, \qquad \mu\left(A_k^j \cap A_k^i\right)=0 \quad \mbox{whenever } i\neq j.
		\]
		If $A_k\to A$ and $A_k^j\to A^j$ in $L^1_\loc(\mu)$ as $k\to\infty$, then 
		\[
		\mu\left(A\triangle \bigcup_{j=1}^N A^j\right)=0, \qquad \mu\left(A^j \cap A^i\right)=0 \quad \mbox{whenever } i\neq j.
		\]
	\end{lemma}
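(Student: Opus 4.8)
The plan is to pass to a subsequence along which all the convergences hold $\mu$-almost everywhere, and then argue pointwise. First I would use that, since $\mu$ is $\sigma$-finite and Radon, $X$ equals, up to a $\mu$-null set, a countable union of compact sets $K_\ell$; as $\mathbf{1}_{A_k}\to\mathbf{1}_A$ and $\mathbf{1}_{A_k^j}\to\mathbf{1}_{A^j}$ in $L^1(K_\ell)$ for every $\ell$ and $j$, a diagonal extraction over the countably many indices $(\ell,j)$ produces a subsequence, not relabeled, along which $\mathbf{1}_{A_k}\to\mathbf{1}_A$ and $\mathbf{1}_{A_k^j}\to\mathbf{1}_{A^j}$ $\mu$-a.e. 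Since the conclusion concerns only the limit sets $A$ and $A^j$, it is enough to prove it along this subsequence. I would also fix a single $\mu$-null set $Z$ — a countable union of the null sets appearing in the hypotheses — outside which, for every $k$, one has $\mathbf{1}_{A_k^i}\mathbf{1}_{A_k^j}\equiv 0$ for all $i\neq j$ and $\mathbf{1}_{A_k}=\sum_{j=1}^N\mathbf{1}_{A_k^j}$.

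The disjointness of the limits is then immediate: for $\mu$-a.e.\ $x$ and all $i\neq j$ one has $\mathbf{1}_{A^i}(x)\mathbf{1}_{A^j}(x)=\lim_k\mathbf{1}_{A_k^i}(x)\mathbf{1}_{A_k^j}(x)=0$, so $\mu(A^i\cap A^j)=0$. For the inclusion $\bigcup_jA^j\subseteq A$ modulo null sets I would observe that for $\mu$-a.e.\ $x$ and each finite $M\le N$, $\mathbf{1}_A(x)=\lim_k\mathbf{1}_{A_k}(x)=\lim_k\sum_{j=1}^N\mathbf{1}_{A_k^j}(x)\ge\sum_{j=1}^M\mathbf{1}_{A^j}(x)$; letting $M\nearrow N$ and using the disjointness just obtained, $\mathbf{1}_A(x)\ge\sum_{j\ge 1}\mathbf{1}_{A^j}(x)=\mathbf{1}_{\bigcup_jA^j}(x)$, whence $\mu\big(\bigcup_jA^j\setminus A\big)=0$. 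When $N<\infty$ the inequality above is an equality, so $\mathbf{1}_A=\mathbf{1}_{\bigcup_jA^j}$ $\mu$-a.e.\ and the proof is complete.

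I expect the remaining point — the reverse inclusion $\mu\big(A\setminus\bigcup_jA^j\big)=0$ for $N=\infty$ — to be the main obstacle, because the pointwise argument above breaks down: a fraction of the unit mass distributed as $\sum_j\mathbf{1}_{A_k^j}(x)=\mathbf{1}_{A_k}(x)$ may escape to arbitrarily large indices $j$ as $k\to\infty$. I would handle this one compact set $K$ at a time, applying Lemma~\ref{lm:equisummability} with $m_{k,j}:=\mu(A_k^j\cap K)$ and $m:=\lim_k\mu(A_k\cap K)=\mu(A\cap K)$: since $\sum_jm_{k,j}=\mu(A_k\cap K)\to m$ and $m_{k,j}\to\mu(A^j\cap K)$, the lemma yields $\sum_j\mu(A^j\cap K)=\mu(A\cap K)$, i.e.\ $\mu\big(\bigcup_jA^j\cap K\big)=\mu(A\cap K)$ by disjointness, hence $\mu\big((A\triangle\bigcup_jA^j)\cap K\big)=0$; passing to the global statement is then immediate since $X$ is, up to a $\mu$-null set, a countable union of compact sets. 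This invokes the third hypothesis of Lemma~\ref{lm:equisummability}, namely $\lim_n\sup_k\sum_{j\ge n}\mu(A_k^j\cap K)=0$, which is vacuous for $N<\infty$ and is available in the situations where the present lemma is applied — in the proof of Theorem~\ref{th_comp_clust} it follows at once from $\sum_j\mu((E_k)_j)=\sum_jm_j<\infty$ being independent of $k$.
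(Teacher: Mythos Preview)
Your approach differs from the paper's. The paper's entire proof is to record, for every compact $K$, the two identities
\[
\mu\Bigl(\Bigl(A_k\triangle\bigcup_{j} A_k^j\Bigr)\cap K\Bigr)=\int_K\Bigl|\mathbf{1}_{A_k}-\sum_{j}\mathbf{1}_{A_k^j}\Bigr|\,d\mu,
\qquad
\mu\bigl((A_k^i\cap A_k^j)\cap K\bigr)=\int_K\mathbf{1}_{A_k^i}\mathbf{1}_{A_k^j}\,d\mu,
\]
and then ``pass to the limit as $k\to\infty$'', using that finite sums and products of characteristic functions are continuous under $L^1(K)$-convergence. You instead extract an a.e.-convergent subsequence and argue pointwise. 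For finite $N$ and for the disjointness claim both routes are correct and essentially equivalent; the paper's is marginally slicker because it avoids the diagonal subsequence extraction.

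The substantive point is your observation about $N=\infty$: you are right, and the paper's one-line proof glosses over exactly the step that fails. In fact the lemma as stated is \emph{false} for $N=\infty$ without an extra tightness hypothesis: on $X=[0,1]$ with Lebesgue measure take $A_k=[0,1]$, $A_k^k=[0,1]$ and $A_k^j=\emptyset$ for $j\neq k$; then $A=[0,1]$ but every $A^j=\emptyset$, so $\mu\bigl(A\triangle\bigcup_jA^j\bigr)=1$. In the paper's scheme this is precisely the failure of $\sum_j\mathbf{1}_{A_k^j}\to\sum_j\mathbf{1}_{A^j}$ in $L^1(K)$ when the sum is infinite. Your repair --- adding the uniform tail condition $\sup_k\sum_{j\ge n}\mu(A_k^j\cap K)\to 0$ and then invoking Lemma~\ref{lm:equisummability} --- is the right fix, and you correctly verify that this extra hypothesis holds in the only place the lemma is used (the proof of Theorem~\ref{th_comp_clust}), since there $\sum_{j\ge n}\mu((E_k)_j\cap K)\le\sum_{j\ge n}m_j\to 0$ uniformly in $k$.
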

	
	\begin{proof}
		It suffices to write for every compact $K\subset X$ the relationships
		\begin{align*}
		\mu\left(\left( A_k \triangle \bigcup_{j=1}^N A_k^j\right) \cap K\right) &=\int_K \left|\mathbf{1}_{A_k}(x)-\left( \sum_{j=1}^N \mathbf{1}_{A_k^j}(x)\right)\right| \,d\mu(x),\\
		\mu\left(\left( A_k^j \cap A_k^i\right) \cap K\right) &=\int_K \mathbf{1}_{A_k^j}(x)\mathbf{1}_{A_k^i}(x) \,d\mu(x),
		\end{align*}
		and pass to the limit as $k\to \infty$.
	\end{proof}

	\bibliographystyle{plain}

\end{document}